\theoremstyle{plain}
\newtheorem{theorem}{Theorem}
\newtheorem{lemma}[theorem]{Lemma}
\newtheorem{proposition}[theorem]{Proposition}
\newtheorem{corollary}[theorem]{Corollary}
\theoremstyle{definition}
\newtheorem{remark}[theorem]{Remark}
\title[Werner measure from chordal $\text{SLE}_{8/3}$]{A simple construction of Werner measure from chordal $\text{SLE}_{8/3}$}
\author{Robert~O. Bauer}
\address{Altgeld Hall\\Department of Mathematics\\ 	University of Illinois at Urbana-Champaign\\ 	1409 West Green Street \\ Urbana, IL 61801, USA}\email{rbauer@math.uiuc.edu}
\keywords{Random simple loops, Riemann surfaces, conformal invariance, restriction}
\subjclass[2000]{60B05, 28C20}
\thanks{Research supported in part by NSF grant DMS 0604216}
\begin{document}

\maketitle

\begin{abstract}
We give a direct construction of the conformally invariant measure on self-avoiding loops in Riemann surfaces (Werner measure) from chordal $\text{SLE}_{8/3}$. We give a new proof of uniqueness of the measure and use Schramm's formula to construct a measure on boundary bubbles encircling an interior point. After establishing covariance properties for this bubble measure, we apply these properties to obtain a measure on loops by integrating measures on boundary bubbles. We calculate the distribution of the conformal radius of boundary bubbles encircling an interior point and deduce from it explicit upper and lower bounds for the loop measure.
\end{abstract}

\section{Introduction}
In \cite{wernerSAL}, Wendelin Werner established existence and uniqueness of a natural measure on the set of self-avoiding loops on Riemann surfaces. In this paper we call this measure {\em Werner measure}. The statement that it is a ``natural'' measure refers to how the measure on loops in a Riemann surface $S$ is related to the measure on loops in another surface $T$. If $\mu^S$ denotes the former measure and $\mu^T$ the latter, then the following is required:
\begin{itemize}
\item {\em (Conformal invariance)} If $S$ and $T$ are conformally equivalent and $\Phi$ is a conformal map from $S$ onto $T$,  then $\mu^T$ is the image measure of $\mu^S$ under $\Phi$.

\item {\em (Restriction)} If $S\subset T$, then $\mu^S$ is the restriction of $\mu^T$ to those loops in $T$ that stay in $S$.
\end{itemize}
The first requirement can be paraphrased as saying that the measures only depend on the conformal structure and that for each such structure there is essentially only one measure, just as conformally equivalent Riemann surfaces are essentially the same. The second requirement is a very simple consistency condition. It relates measures for certain surfaces that may or may not be conformally equivalent. An example for the latter is the case where $S$ is a ring domain in a torus $T$. In the construction of Werner measure we outline below, this will be instrumental in going from simple topologies to more complicated ones. The case where $S$ is conformally equivalent to $T$ is crucial for the uniqueness of Werner measure. To explain why, suppose  $S$ is properly contained in (i.e. not equal to) $T$. This can only occur if $S$ and $T$ are simply connected hyperbolic Riemann surfaces, for example if $T$ is the unit disk $\mathbb U=\{|z|<1\}$ and $S$ is a simply connected proper subdomain. Now there are two ways to obtain $\mu^S$ from $\mu^T$---one by conformal invariance, the other by restriction---and these have to agree, giving a condition on $\mu^T$. As $S$ ranges over all simply connected subdomains we get many such conditions and it is not at all obvious that even one measure exists that satisfies them all.

However, once one such measure is found, any multiple of this measure by a scalar $\lambda>0$ gives another such measure. Thus uniqueness refers here always to uniqueness up to a multiplicative constant. Also, for the measure to be of interest we need to exclude degenerate cases, where the measure is either identically zero or infinite on all sets of interest. A useful notion of nondegeneracy is the following: If $S$ is a proper simply connected subdomain of $\mathbb U$ containing $0$, and $\ell$ denotes a self-avoiding loop, then
\begin{equation}\label{E:nondegenrate}
0<\mu^{\mathbb U}(\ell\text{ surrounds $0$ but does not stay in $S$})<\infty.
\end{equation}
Restriction and conformal invariance then immediately imply that the total mass $|\mu^S|$ is infinite for any Riemann surface $S$. Indeed, by conformal invariance,
\begin{align}
&\mu^{\mathbb U}(\ell\text{ surrounds $0$ but does not stay in $\{|z|<2^{-1}\}$})\notag\\
&=\mu^{\{|z|<2^{-n}\}}(\ell\text{ surrounds $0$ but does not stay in $\{|z|<2^{-(n+1)}\}$}),\notag
\end{align}
for any $n\in\mathbb Z^+$, while, by restriction,
\begin{align}
&\mu^{\mathbb U}(\ell\text{ surrounds 0})\notag\\
&=\sum_{n=0}^{\infty}\mu^{\{|z|<2^{-n}\}}(\ell\text{ surrounds $0$ but does not stay in $\{|z|<2^{-(n+1)}\}$})\notag.
\end{align}
If $S$ is an arbitrary Riemann surface, then it contains the conformal image of $\mathbb U$. Whence, by conformal invariance and restriction,
\[
|\mu^S|\ge|\mu^{\mathbb U}|=\infty.
\]
We will later see that while $\mu^S$ is an infinite measure, it is $\sigma$-finite.

In this paper we give new proofs of uniqueness and existence of Werner measure and give explicit upper and lower bounds for the measure of loops that go around an annulus as a function of its modulus. More precisely, we do the following:

In Section \ref{S:P}, we introduce and define some of the objects we will use and show that given measures $\mu^{\langle U\rangle}$ on loops going around ring domains $U$ in a Riemann surface $S$, so that two such measures agree on their overlap, there exists a unique measure $\mu^S$ on loops in $S$ whose restriction to $\langle U\rangle$ agrees with $\mu^{\langle U\rangle}$, see Proposition \ref{P:restriction}. This is a straightforward application of direct sums of countably many measure spaces. That countably many spaces is enough will be shown to follow from the second countability of Riemann surfaces. This argument is not new and is briefly sketched in \cite{wernerSAL}. We spell it out here for the sake of completeness and the convenience of the reader.

We show in Section \ref{S:U} that---up to a multiplicative constant---there can be at most one measure on self-avoiding loops that is conformally invariant and satisfies restriction. This is a consequence of an explicit formula for
\begin{equation}\label{E:explicit}
\mu^{\mathbb U}(\ell\text{ surrounds $0$ but does not stay in $S$})
\end{equation}
for any simply connected domain $S\subset\mathbb U$ containing $0$. This formula was first derived in \cite[Proposition 3]{wernerSAL} using Loewner's theory of slit mappings. We give a new proof which is entirely elementary. First, we derive from the additivity of measures, conformal invariance, and restriction an explicit formula for
\[
\mu^{\mathbb U}(\ell\text{ surrounds $0$ but does not stay in $\{|z|<e^{-x}\}$}), \quad x>0.
\]
Then we use Taylor's formula to derive an expression for \eqref{E:explicit} from it.

In Section \ref{S:Bub}, we briefly introduce Schramm Loewner evolution (SLE) and show how chordal $\text{SLE}_{8/3}$ and Schramm's formula can be used to obtain a measure on {\em boundary bubbles encircling an interior point}. The basic idea is to condition chordal $\text{SLE}_{8/3}$ in the upper half-plane from $x$ to $\infty$ to pass to the right of $i$ and then let $x\to-\infty$. We show that the limiting object exists and that this measure, $Bub^{(i)}$, transforms like a quadratic differential, see Theorem \ref{T:bbubble}.

Boundary bubbles, which do not have to encircle a given interior point, have been introduced in \cite{lsw.chordal}, see also \cite[Section 5.5]{lawlerbook}. There, boundary bubbles arise as limits of outer boundaries of Brownian excursions as the excursion endpoints approach each other. The construction in these references leads to a $\sigma$-finite measure $Bub$ on boundary bubbles. $Bub^{(i)}$ then is the restriction of $Bub$ (or rather a suitable multiple thereof) to boundary bubbles encircling $i$. However we do not use this relationship in this paper. Instead, we exhibit a Bessel-type process $\theta_t$ on $[0,\pi]$ which serves as driving function for the Loewner equation. If $\theta_0>0$, we get chordal $\text{SLE}_{8/3}$ conditioned to pass to the right of $i$, if $\theta_0=0$, we get boundary bubbles encircling $i$.

The original construction of the conformally invariant restriction measure on self-avoiding loops in \cite{wernerSAL} is based on a conformally invariant measure on Brownian loops that had been introduced and studied in \cite{loopsoup}. By considering outer boundaries of Brownian loops, Werner obtains a measure on self-avoiding loops from it. However, the asymmetry of the construction, between the inside and the outside, requires an additional argument to check whether the measure on outer boundaries is invariant under inversion relative to an interior and an exterior point. For this part of the proof $\text{SLE}_{8/3}$-boundary bubbles are used in \cite{wernerSAL}.

We integrate over boundary bubbles encircling an interior point in Section \ref{S:E} to obtain a measure on loops. This idea of obtaining loop measures from boundary bubble measures goes back to \cite{loopsoup}, where it is applied to Brownian bubbles and Brownian loops.  Heuristically, the idea is the following: to count the loops going around the annulus $\{1<|z|<e^a\}$, we go along a cross-cut from the outer boundary component $\{|z|=e^a\}$ to the inner boundary component $\{|z|=1\}$, and count each loop when we come across it for the first time. When we encounter a loop for the first time, then the loop is a boundary bubble (encircling the point $0$) if we consider the part of the cross-cut we have already traversed as part of the boundary of a ring domain. In this way we can go from bubble measures to a loop measure.

To express this procedure as a Riemann sum it is necessary to determine what the ``increments'' along the cross-cut are. This boils down to parametrizing the cross-cut. Because the boundary bubble measures $Bub^{(i)}$, $Bub$ and also the measure on Brownian boundary bubbles all transform like a quadratic differential with respect to certain conformal transformations $\Phi$, the parametrization must be such that, infinitesimally, the change in parameter when the cross-cut is mapped by $\Phi$ is given by $dt=\Phi'(x)^2\ ds$. Here $x$ is the boundary point where the cross-cut emerges. This transformation-rule holds for any parametrization by ``conformal invariant,'' e.g. by conformal radius, conformal modulus (as in this paper), or half-plane capacity (as in \cite{wernerSAL} and \cite{loopsoup}). The combination of integrand ($Bub^{(i)}$) and integrator ($ds)$ then is conformally invariant.

A natural measure of the size of a boundary bubble encircling an interior point $p$ is the conformal radius of the interior of the bubble from $p$. In Section \ref{S:CR} we calculate the distribution of the conformal radius , which turns out to be a simple expression in the Dedekind $\eta$-function, see Theorem \ref{T:bubble}. Finally, we use the Koebe $1/4$ theorem to relate the conformal radius of a boundary bubble to the modulus of an annulus containing the bubble, in order to obtain upper and lower bounds for the measure of boundary bubbles going around an annulus as a function of the modulus, see Corollary \ref{C:bounds}.

\subsection{Further motivation}

As the outline above makes clear, this paper relies heavily on ideas developed in \cite{lsw.chordal}, \cite{loopsoup}, and \cite{wernerSAL}. A main motivation for us to write this paper, was to show in a way that is at once simple and self-contained that for $\text{SLE}_{8/3}$ ``chords'', ``bubbles,'' and ``loops'' are obtainable, one from the other, in a very intuitive way. Concerning simple loops in 2-dimensional geometries, Werner measure is a central object and it is desirable to have multiple approaches to its construction. One approach may lend itself to more easily investigate certain of Werner measure's aspects than another, and differing approaches may motivate different further questions. We close this section by mentioning two conjectures, one which situates Werner measure in a larger family of loop-measures, and the other which aims to describe Werner measure as a scaling limit.

In \cite{kontsevich.suhov}, Kontsevich and Suhov conjecture the existence of a 1-parameter family of locally conformally covariant measures on loops in Riemann surfaces with values in a certain determinant bundle. The parameter is the central charge $c$ from conformal field theory. For $c=0$, the bundle becomes trivial, and the measures are ordinary, scalar-valued measures. The parameter in the Schramm-Loewner evolution corresponding to central charge 0 is $\kappa=8/3$, and the scalar-valued measure is, in fact, given by Werner measure. It would be very interesting to construct the measures of Kontsevich and Suhov for other values of $c$. In this direction we note that our construction for boundary bubbles encircling an interior point also works for values of $\kappa\in(0,4]$. However, for $\kappa\neq8/3$ the resulting measures no longer transform like quadratic differentials so that our further construction of loop measures from bubble measures no longer applies.

Finally, both conformal field theory and Schramm Loewner evolution grew out of the desire to better describe and explain the behavior exhibited by 2-dimensional systems of statistical mechanics at criticality. Concerning the measure under consideration in this paper, it is conjectured to arise as the scaling limit of a certain model of random self-avoiding polygons on a regular lattice. To be specific, a self-avoiding polygon (SAP) of length $2n$ on the lattice $\mathbb Z^2$ is a finite sequence $\omega=(\omega_0,\omega_1,\dots,\omega_{2n})$ of points in $\mathbb Z^2$ with $|\omega_{k+1}-\omega_{k}|=1$, $0\le k<2n$, $\omega_0=\omega_{2n}$, and $\omega_j\neq\omega_k$ for $0\le j<k<2n$. We call a pair of consecutive points $\{\omega_j,\omega_{j+1}\}$ an edge of the polygon. It is known that the number of SAPs of length $2n$ with $\omega_0=0$ is of the order $\beta^{2n}$, where $\beta$ is the connective constant of the lattice. If we identify two SAPs $\omega,\omega'$ of length $2n$ if they have the same set of edges, then each equivalence class $[\omega]$ has $4n$ representatives. Define a measure $\mu_{SAP}$ on equivalence classes of SAPs by giving $[\omega]$ mass $\beta^{-2n}$. Then the following conjecture for a scaling limit of $\mu_{SAP}$ is given in \cite[Section 3.4.9]{lsw.SAW} (and also \cite[Section 7.1]{wernerSAL}): If $D$ is a planar domain and $N$ a positive integer, denote $\Gamma(D,N)$ the set of all $[\omega]$ so that $N^{-1}\omega$ is entirely (including lattice edges) contained in $D$. Then $\mu_{SAP}$ on $\Gamma(D,N)$ converges weakly (when the curves are rescaled by a factor $1/N$) to $\lambda\mu^D$, for some scalar $\lambda>0$. For example, for all $x>0$ we should have
\begin{align}
&\lambda\mu^D(\ell\text{ goes ``around'' }\{e^{-x}<|z|<1\})\notag\\
&=\lim_{N\to\infty}\mu_{SAP}([\omega]\text{ goes ``around'' }\{Ne^{-x}<|z|<N\}).
\end{align}

{\em Acknowledgement:} The author would like to thank Professor Roeckner and the SFB 701 at the University of Bielefeld for the hospitality during a visit where part of this work was completed.

\section{Preliminaries}\label{S:P}

A {\em Riemann surface} is a connected Hausdorff space $S$ together with a collection of charts $\{U_{\alpha},z_{\alpha}\}$ with the following properties: i) the $U_{\alpha}$ form an open covering of $S$, ii) each $z_{\alpha}$ is a homeomorphic mapping of $U_{\alpha}$ onto an open subset of the complex plane $\mathbb C$, iii) if $U_{\alpha}\cap U_{\beta}\neq\emptyset$, then $z_{\alpha\beta}=z_{\beta}\circ z_{\alpha}^{-1}$ is complex analytic on $z_{\alpha}(U_{\alpha}\cap U_{\beta})$.

A {\em loop} in a Riemann surface $S$ is a simple closed curve $\ell\subset S$. More precisely, a loop is a homeomorphism $\ell$ from the unit circle $S^1$ into $S$. We denote by $\mathcal L^S$ the set of loops in $S$.

An {\em ring domain} in $S$ is a Riemann surface $U\subset S$ whose boundary consists of two components and has genus zero (no handles). In particular, there exists a conformal map from $U$ onto some annulus $\{z:R_1<|z|<R_2\}$, where $0\le R_1<R_2\le\infty$. The number $a=\log R_2/R_1\in(0,\infty]$ is unique and is called the modulus of the ring domain $U$. We will write $\mod(U)=a$.

We say a loop $\ell$ separates the boundary components of a ring domain $U$ if $U\backslash\ell$ consists of two ring domains. We denote by $\langle U\rangle$ the set of all loops $\ell$ which separate the boundary components of $U$. Note that each loop $\ell$ in a Riemann surface $S$ separates the boundary components of some ring domain $U$ in $S$.

It is a classical result that Riemann surfaces are second countable, \cite{ahlfors.sario}. This has several important consequences. Let $S$ be a Riemann surface, $\rho$ a metric on $S$, and define a topology on $\mathcal L^S$ by taking the sets
\[
B(\ell,\epsilon)\equiv\{\ell'\in\mathcal L^S:\sup_{s\in S^1}\rho(\ell(s),\ell'(s))<\epsilon\}
\]
as a neighborhood basis at $\ell\in\mathcal L^S$ as $\epsilon$ varies over $(0,\infty)$.
Because $S^1$ is compact, this topology on $\mathcal L^S$ depends only on the topology of $S$ and not on the metric $\rho$.

\begin{lemma}\label{L:separable}
Let $S$ be a Riemann surface.

(i) There exists a countable collection of loops $\{\ell_n\}$ which is dense in $\mathcal L^S$.

(ii) There exists a countable collection of ring domains of finite modulus $\{U_n\}$ such that $\bigcup_n\langle U_n\rangle=\mathcal L^S$
.
\end{lemma}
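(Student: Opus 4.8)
The plan is to deduce both parts from the fact that $S$, being a second-countable real $2$-manifold, is a separable metric space; I fix once and for all a metric $\rho$ on $S$ compatible with its topology (the topology on $\mathcal L^S$ does not depend on this choice, as already remarked). For (i), observe that the topology defined above on $\mathcal L^S$ is precisely the subspace topology it inherits from $C(S^1,S)$ under the uniform metric $d_\infty(f,g)=\sup_{s\in S^1}\rho(f(s),g(s))$, since $B(\ell,\epsilon)$ is the $d_\infty$-ball of radius $\epsilon$ about $\ell$ intersected with $\mathcal L^S$. Now $C(S^1,S)$ with $d_\infty$ is separable, because $S^1$ is compact metric and $S$ is separable metric: embedding $S$ into the Hilbert cube $Q=[0,1]^{\mathbb N}$ identifies $C(S^1,S)$ with a subspace of $C(S^1,Q)\cong\prod_n C(S^1,[0,1])$, a countable product of separable metric spaces, hence separable. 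Since subspaces of separable metric spaces are separable, $\mathcal L^S$ has a countable dense subset $\{\ell_n\}$, whose members are loops precisely because they lie in $\mathcal L^S$. This proves (i).

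For (ii), I would use (i) together with the fact that $\langle U\rangle$ is open in $\mathcal L^S$ for suitably chosen ring domains $U$. As already noted, every loop $\ell$ separates the boundary components of some ring domain; shrinking the standard annular neighborhood $\cong S^1\times(-1,1)$ of the (necessarily two-sided) simple closed curve $\ell$ in the orientable surface $S$, one may take such a ring domain $U=U_\ell$ with $\overline{U}$ compact and $\mod(U)<\infty$. Fix such a $U$ and a loop $\ell\in\langle U\rangle$, and choose $\delta>0$ so small that the $\delta$-neighborhood of the compact set $\ell(S^1)$ lies in $U$ and, after transferring to the model annulus by a homeomorphism $U\cong S^1\times(-1,1)$, the straight-line homotopy between any two maps pointwise within $\delta$ of $\ell$ stays inside the annulus. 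Then any $\ell'\in\mathcal L^S$ with $d_\infty(\ell,\ell')<\delta$ satisfies $\ell'\subset U$ and is freely homotopic in $U$ to $\ell$; as $\ell$ generates $\pi_1(U)\cong\mathbb Z$ --- which is exactly what $\ell\in\langle U\rangle$ says for a simple closed curve in a ring domain --- so does $\ell'$, hence $\ell'\in\langle U\rangle$. Thus $\{\langle U\rangle: U\subset S\text{ a ring domain},\ \overline U\text{ compact},\ \mod(U)<\infty\}$ is an open cover of $\mathcal L^S$; by (i) the space $\mathcal L^S$ is separable metric, hence Lindel\"of, so the cover has a countable subcover $\{\langle U_n\rangle\}$, and these satisfy $\mod(U_n)<\infty$ and $\bigcup_n\langle U_n\rangle=\mathcal L^S$.

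The only point needing genuine care, rather than routine bookkeeping, is the openness of $\langle U\rangle$: one must know that a simple closed curve in a surface has an annular neighborhood with compact closure and finite modulus, and that two simple closed curves lying in such an annulus and pointwise close are homotopic there (which is where relative compactness of $\overline U$ enters, via the straight-line homotopy in the model annulus). The separability of $C(S^1,S)$ and the implication ``separable metric $\Rightarrow$ Lindel\"of'' are standard, and the remaining estimates (uniform continuity, distance from a compact set to a closed set, $\pi_1$ of an annulus) are elementary; so I do not anticipate a serious obstacle beyond setting up that homotopy argument cleanly.
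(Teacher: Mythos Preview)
Your proof is correct. Part (i) is essentially the same as the paper's: both obtain separability of $\mathcal L^S$ from separability of the ambient path space, you via an explicit Hilbert-cube embedding, the paper by invoking that Riemann surfaces are Polish and that the path space of a Polish space is Polish.

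Part (ii) is where the approaches genuinely diverge. The paper constructs the countable family \emph{explicitly}: it takes all ring domains bounded by two loops from the dense family $\{\ell_n\}$. Given an arbitrary loop $\ell$, it picks any ring domain $U$ with $\ell\in\langle U\rangle$, splits $U\setminus\ell$ into two sub-ring-domains $U_1,U_2$, and uses density to find $\ell'\in\{\ell_n\}\cap\langle U_1\rangle$ and $\ell''\in\{\ell_n\}\cap\langle U_2\rangle$; then $\ell$ separates the ring domain bounded by $\ell'$ and $\ell''$. You instead argue abstractly: you show each $\langle U\rangle$ (for $U$ with compact closure) is open in $\mathcal L^S$, observe these sets cover $\mathcal L^S$, and extract a countable subcover via Lindel\"of. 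Both arguments tacitly use that $\langle U\rangle$ is open---the paper needs it to hit $\langle U_1\rangle$, $\langle U_2\rangle$ with the dense family---but you make this explicit and shoulder the burden of proving it, which is the one place your write-up requires real care (the ``straight-line homotopy'' in the model annulus is cleanest if you pass to the universal cover $\mathbb R\times(-1,1)$, or simply invoke that sufficiently $C^0$-close maps into a manifold are homotopic). The paper's route buys an explicit description of the $U_n$, which is mildly convenient for the sequel; yours is more conceptual and makes transparent why second countability is exactly what is needed.
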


\begin{proof}
Because Riemann surfaces are locally compact and second countable, it follows from \cite[Satz 31.5]{bauer:MI} that a Riemann surface $S$ is a Polish space. Recall that a topological space $S$ is a Polish space if the topology has a countable basis and if there exists a complete metric $\rho$ on $S$ which induces the topology. The path space of a Polish space is again a Polish space, \cite[Section 3.4]{stroock}. Since $\mathcal L^S$ is a subset of the path space of $S$, it follows that there exists a countable collection of loops $\{\ell_n\}$ which is dense in $\mathcal L^S$.

Consider now the countable collection of ring domains whose boundary consists of two of the loops from the collection $\{\ell_n\}$. We will show that this collection satisfies (ii): Since both boundary components of each such ring domain contains more than one point, the region has finite modulus. Next, if $\ell$ is a loop in $S$, then $\ell\in\langle U\rangle$ for some ring domain $U$. Denote $U_1$, $U_2$ the two components of $U\backslash\ell$. By (i), there exist loops $\ell',\ell''\in\{\ell_n\}$ such that $\ell'\in\langle U_1\rangle$, $\ell''\in\langle U_2\rangle$. Then $\ell',\ell''$ bound a ring domain $\tilde{U}$ and $\ell$ separates $\ell',\ell''$.
\end{proof}

From now on we will ignore the parametrization of a loop. Specifically,
we will identify two loops $\ell,\ell'$ if there exists a homeomorphism $\varphi:S^1\to S^1$ such that $\ell'=\ell\circ\varphi$.
We will use the same notation, $\ell$, for a loop and its equivalence class, and $\mathcal L^S$ for the set of equivalence classes.

If $U,V$ are ring domains in $S$ and $\langle U\rangle\cap\langle V\rangle\neq\emptyset$, then there exists a ring domain $W$ so that $\langle U\rangle\cap\langle V\rangle=\langle W\rangle$. In fact, $W$ is the connected component of $U\cap V$ whose boundary components are separated by a loop in $\langle U\rangle$ or $\langle V\rangle$. We will write $U\wedge V$ for this component. If, for convenience, we include the empty set as a ring domain and set $\langle\emptyset\rangle=\emptyset$, then
the collection of sets
\[
\{\langle U\rangle:\text{ $U$ ring domain in $S$}\}
\]
is stable under intersection, and $\langle U\rangle \cap\langle V\rangle=\langle U\wedge V\rangle$. We write $\sigma^S$ for the $\sigma$-algebra of subsets of $\mathcal L^S$ generated by this collection. If $U$ is a ring domain, then the collection of sets
\[
\{\langle V\rangle:\text{ $V$ ring domain in $U$ such that $\langle V\rangle\subset\langle U\rangle$}\}
\]
is also stable under intersection. We write $\sigma^{\langle U\rangle}$ for the $\sigma$-algebra of subsets of $\langle U\rangle$ generated by this collection. Note that the $\sigma$-algebra $\sigma^U$ is strictly bigger than $\sigma^{\langle U\rangle}$.

We recall that any $\sigma$-finite measure on $\sigma^S$ is uniquely determined by its values on $\{\langle U\rangle:\text{ $U$ ring domain in $S$}\}$.

\begin{proposition}\label{P:restriction}
Let $S$ be a Riemann surface. Suppose for each ring domain of finite modulus $U\subset S$ we are given a finite measure $\mu^{\langle U\rangle}$ on $(\langle U\rangle,\sigma^{\langle U\rangle})$ such that
\[
\mu^{\langle U\rangle}(\langle W\rangle)=\mu^{\langle V\rangle}(\langle W\rangle),
\]
whenever $U,V,W$ are ring domains with $W\subset U\cap V$. Then there exists a unique $\sigma$-finite measure $\mu^S$ on $(\mathcal L^S,\sigma^S)$ such that the restriction of $\mu^S$ to $\sigma^{\langle U\rangle}$ equals $\mu^{\langle U\rangle}$.
\end{proposition}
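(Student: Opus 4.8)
The plan is to construct $\mu^S$ as a countable direct sum of the given finite measures, carried on a measurable partition of $\mathcal L^S$ extracted from Lemma~\ref{L:separable}(ii); uniqueness will then follow from the recalled fact that a $\sigma$-finite measure on $\sigma^S$ is pinned down by its values on the sets $\langle U\rangle$.

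First I would fix, by Lemma~\ref{L:separable}(ii), a countable family $\{U_n\}$ of ring domains of finite modulus with $\bigcup_n\langle U_n\rangle=\mathcal L^S$, and disjointify it: put $A_1=\langle U_1\rangle$ and $A_n=\langle U_n\rangle\setminus\bigcup_{k<n}\langle U_k\rangle$ for $n\ge2$. Since $\langle U_n\rangle\cap\langle U_k\rangle=\langle U_n\wedge U_k\rangle$ with $U_n\wedge U_k$ a ring domain in $U_n$ and $\langle U_n\wedge U_k\rangle\subset\langle U_n\rangle$, each difference set and hence each $A_n$ lies in $\sigma^{\langle U_n\rangle}$; the $A_n$ are pairwise disjoint and partition $\mathcal L^S$. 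The step that carries the real bookkeeping is the identification of trace $\sigma$-algebras: for any ring domain $U$ one has $\{B\cap\langle U\rangle:B\in\sigma^S\}=\sigma^{\langle U\rangle}$, and for nested ring domains $V\subset U$ with $\langle V\rangle\subset\langle U\rangle$ the trace of $\sigma^{\langle U\rangle}$ on $\langle V\rangle$ equals $\sigma^{\langle V\rangle}$. Both follow from the $\pi$-system identity $\langle U\rangle\cap\langle V\rangle=\langle U\wedge V\rangle$ together with the fact that a ring domain is connected, so that $W\wedge U=W$ whenever $W\subset U$; thus the generators of the trace algebra are exactly the generators of $\sigma^{\langle\cdot\rangle}$. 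Consequently, for $B\in\sigma^S$ every slice $B\cap A_n$ lies in $\sigma^{\langle U_n\rangle}$, and I may define
\[
\mu^S(B)=\sum_{n}\mu^{\langle U_n\rangle}(B\cap A_n),\qquad B\in\sigma^S.
\]
Countable additivity of $\mu^S$ is inherited from that of each $\mu^{\langle U_n\rangle}$ and the disjointness of the $A_n$, and $\mu^S(A_n)\le\mu^{\langle U_n\rangle}(\langle U_n\rangle)<\infty$ shows $\mu^S$ is $\sigma$-finite.

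Next I would check the restriction property. Fix a ring domain $U$ of finite modulus and $B\in\sigma^{\langle U\rangle}$. Each slice satisfies $B\cap A_n\subset\langle U\rangle\cap\langle U_n\rangle=\langle U\wedge U_n\rangle$ and, by the trace identities above, $B\cap A_n\in\sigma^{\langle U\wedge U_n\rangle}$, which is simultaneously the trace on $\langle U\wedge U_n\rangle$ of $\sigma^{\langle U\rangle}$ and of $\sigma^{\langle U_n\rangle}$. By hypothesis the finite measures $\mu^{\langle U\rangle}$ and $\mu^{\langle U_n\rangle}$ agree on the $\pi$-system $\{\langle W\rangle:W\text{ a ring domain in }U\wedge U_n,\ \langle W\rangle\subset\langle U\wedge U_n\rangle\}$, which contains $\langle U\wedge U_n\rangle$ itself so the total masses match; Dynkin's $\pi$–$\lambda$ theorem then propagates the agreement to all of $\sigma^{\langle U\wedge U_n\rangle}$, giving $\mu^{\langle U_n\rangle}(B\cap A_n)=\mu^{\langle U\rangle}(B\cap A_n)$. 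Since the $B\cap A_n$ lie in $\sigma^{\langle U\rangle}$ and partition $B$, summing over $n$ and using countable additivity of $\mu^{\langle U\rangle}$ yields $\mu^S(B)=\mu^{\langle U\rangle}(B)$, i.e.\ the restriction of $\mu^S$ to $\sigma^{\langle U\rangle}$ is $\mu^{\langle U\rangle}$.

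Finally, for uniqueness, suppose $\nu$ is another $\sigma$-finite measure on $\sigma^S$ whose restriction to each $\sigma^{\langle U\rangle}$ is $\mu^{\langle U\rangle}$. For $U$ of finite modulus, $\langle U\rangle\in\sigma^{\langle U\rangle}$ gives $\nu(\langle U\rangle)=\mu^{\langle U\rangle}(\langle U\rangle)=\mu^S(\langle U\rangle)$; for $U$ of infinite modulus one exhausts $\langle U\rangle$ by an increasing sequence $\langle V_k\rangle$ with $V_k$ of finite modulus (any loop in $\langle U\rangle$ is compact, hence separates the boundary components of some finite-modulus sub-ring-domain of $U$) and uses continuity from below, so $\nu(\langle U\rangle)=\mu^S(\langle U\rangle)$ for every ring domain $U$; since a $\sigma$-finite measure on $\sigma^S$ is determined by these values, $\nu=\mu^S$. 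I expect the main obstacle to be purely the measure-theoretic bookkeeping in the middle paragraph — the identification of the trace $\sigma$-algebras and the upgrade of the overlap compatibility from the generating $\pi$-system to the full $\sigma$-algebras — while the rest is the standard construction of a countable direct sum of measure spaces.
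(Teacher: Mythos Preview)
Your proposal is correct and follows essentially the same route as the paper: extract a countable cover $\{\langle U_n\rangle\}$ from Lemma~\ref{L:separable}(ii), disjointify it to a measurable partition, form the direct sum of the restricted measures, and verify the restriction property via the identity $\langle U\rangle\cap\langle U_n\rangle=\langle U\wedge U_n\rangle$. If anything, you are more explicit than the paper about the trace $\sigma$-algebra identifications and the Dynkin $\pi$--$\lambda$ step upgrading the overlap hypothesis from generators to all of $\sigma^{\langle U\wedge U_n\rangle}$.
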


\begin{proof}
By the remark preceding the Theorem, the uniqueness of $\mu^S$ is not in doubt. To prove the existence of a measure $\mu^S$ with the desired properties,
let $\{U_n\}$ be a sequence of ring domains in $S$ with the properties from Lemma \ref{L:separable}. Define sets $\mathcal V_n, n\in\mathbb Z^+$ by $\mathcal V_1=\langle U_1\rangle$ and
\[
\mathcal V_n=\langle U_n\rangle\backslash(\langle U_1\rangle\cup\dots\cup\langle U_{n-1}\rangle),\quad n>1.
\]
Then the collection $\{\mathcal V_n\}$ is a partition of the loop-space $\mathcal L^S$. Define the trace $\sigma$-algebras
\[
\sigma_n=\{B\cap\mathcal V_n:B\in\sigma^{\langle U\rangle}\}.
\]
We now claim that for each $n\in\mathbb Z^+$, $\mathcal V_n\in\sigma^{\langle U_n\rangle}$, and that $\sigma^S$ consists of the sets $\bigcup_n V_n$, where $V_n\in\sigma_n$. The first claim is obviously true for $n=1$. For $n>1$, we simply note that in that case
\[
\mathcal V_n=\left(\cdots\left((\langle U_n\rangle\backslash\langle U_1\wedge U_n\rangle)\backslash\langle U_2\wedge U_n\rangle\right)\backslash\dots\right)\backslash\langle U_{n-1}\wedge U_n\rangle.
\]
Since $\langle U_n\wedge U_m\rangle\in\sigma^{\langle U_n\rangle}$ for any $n,m$, we get $\mathcal V_n\in\sigma^{\langle U_n\rangle}$. To prove the second claim, we note that the collection $\{\bigcup_n V_n: V_n\in\sigma_n\}$
is a $\sigma$-algebra. Furthermore, if $U$ is a ring domain in $S$, then
\[
\langle U\rangle=\bigcup_n (\langle U\rangle\cap\mathcal V_n)=\bigcup_n(\langle U\wedge U_n\rangle\cap\mathcal V_n).
\]
Since $\langle U\wedge U_n\rangle\in\sigma^{\langle U_n\rangle}$, the claim follows.

Thus, we can restrict $\mu^{\langle U_n\rangle}$ to $\sigma_n$ and  define a measure $\mu^S$ on $\sigma^S$ by
\begin{equation}\label{E:measure}
\mu^S(V)=\sum_n\mu^{\langle U_n\rangle}(V_n),
\end{equation}
where $V=\bigcup_n V_n$, and $ V_n\in\sigma_n$.

Finally, we show that this measure has the desired property, i.e. that the restriction of the measure $\mu^S$ defined in \eqref{E:measure} to $\sigma^{\langle U\rangle}$ equals $\mu^{\langle U\rangle}$. To this end, let $U$ be a ring domain in $S$. Then
\[
\langle U\rangle=\bigcup_n(\langle U\rangle\cap\mathcal V_n).
\]
It is easy to see that $\langle U\rangle\cap\mathcal V_n\in\sigma^{\langle U\rangle}\cap\sigma^{\langle U_n\rangle}$. Thus, by assumption,
\[
\mu^S(\langle U\rangle)=\sum_n\mu^{\langle U_n\rangle}(\langle U\rangle\cap\mathcal V_n)=\sum_n\mu^{\langle U\rangle}(\langle U\rangle\cap\mathcal V_n)=\mu^{\langle U\rangle}(\langle U\rangle).
\]
\end{proof}

Suppose now that for each Riemann surface $S$ we are given a nondegenerate measure $\mu^S$ on $(\mathcal L^S,\sigma^S)$. We say that the family $\{\mu^S\}$ is a  {\em conformal restriction family} if it satisfies the properties we listed at the beginning of the introduction:
\begin{itemize}
\item {\em conformal invariance.} If $\Phi$ is a conformal map from a Riemann surface $S$ onto another surface $T$, then $\Phi_*\mu^S=\mu^T$.
\item {\em restriction.} If $S$ is a Riemann surface contained in the surface $T$, then $\mu^T\upharpoonright_{\sigma^S}=\mu^S$.
\end{itemize}
Note that it is enough to check these properties on ring domains. I.e. conformal invariance is equivalent to the statement that if $U$ is a ring domain in $T$, then $\mu^S(\langle \Phi^{-1}(U)\rangle)=\mu^T(\langle U\rangle)$, while restriction is equivalent to the statement that if
$U$ is a ring domain in $S$, then $\mu^T(\langle U\rangle)=\mu^S(\langle U\rangle)$.

\section{Uniqueness}\label{S:U}

The following result, essentially Proposition 3 from \cite{wernerSAL}, shows that the combination of conformal invariance and restriction specifies the family. The proof in \cite{wernerSAL} proceeds by establishing a semi-group property and then bringing Loewner's theory of slit mappings to bear, in particular, that compositions of slit mappings are dense, in an appropriate sense, in the space of conformal maps. For an introduction to Loewner's method, see \cite{loewner} and Chapter 6 of \cite{ahlforsCI}. We will give a new proof of uniqueness, which is entirely elementary.

\begin{theorem}[Werner \cite{wernerSAL}]\label{T:uniqueness}
Up to a multiplicative constant, there is at most one conformal restriction family. In fact, if $\{\mu^S\}$ is a conformal restriction family, $D'\subset D$ simply connected planar domains not equal to $\mathbb C$, and $z\in D'$, then
\begin{equation}\label{E:phiprime}
\mu^{\mathbb C}(\langle D\backslash\{z\}\rangle\backslash\langle D'\backslash z\rangle)=c\log \Phi'(z),
\end{equation}
where $\Phi$ is the conformal map from $D'$ onto $D$ fixing $z$ and with positive derivative there.
\end{theorem}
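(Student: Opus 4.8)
The plan is to reduce, by conformal invariance and restriction, to an explicit computation in the unit disk, where additivity of measures combined with the conformal self-similarity $z\mapsto z/r$ of concentric disks forces a logarithmic law, and then to pass to a general simply connected domain by an exhaustion argument controlled by the Koebe distortion theorem. Concretely, since $D\neq\mathbb C$, I would fix the Riemann map $F\colon D\to\mathbb U$ with $F(z)=0$, $F'(z)>0$. Both $D\setminus\{z\}$ and $D'\setminus\{z\}$ are ring domains in $D$, so $\langle D\setminus\{z\}\rangle\setminus\langle D'\setminus\{z\}\rangle\in\sigma^D$; restriction replaces $\mu^{\mathbb C}$ by $\mu^D$ on this set, conformal invariance of $F$ then replaces $\mu^D$ by $\mu^{\mathbb U}$ and the set by $\langle\mathbb U\setminus\{0\}\rangle\setminus\langle F(D')\setminus\{0\}\rangle$, and (since $F(z)=0$ and $\Phi(z)=z$) the chain rule shows the normalized conformal map of $F(D')$ onto $\mathbb U$ still has derivative $\Phi'(z)$ at $0$. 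Thus it suffices to prove that for every simply connected $S\subsetneq\mathbb U$ with $0\in S$,
\[
M(S):=\mu^{\mathbb U}\bigl(\langle\mathbb U\setminus\{0\}\rangle\setminus\langle S\setminus\{0\}\rangle\bigr)=-c\log\rho,
\]
where $\rho\in(0,1)$ is the conformal radius of $S$ from $0$, i.e.\ $\rho=1/\Phi_S'(0)$ for the normalized map $\Phi_S\colon S\to\mathbb U$; by \eqref{E:nondegenrate} we have $M(S)\in(0,\infty)$, and the case $S=\mathbb U$ is trivial.

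Next I would treat the concentric disks $\mathbb U_r=\{|z|<r\}$. For $1>r_1>r_2>0$ the inclusions $\langle\mathbb U\setminus\{0\}\rangle\supset\langle\mathbb U_{r_1}\setminus\{0\}\rangle\supset\langle\mathbb U_{r_2}\setminus\{0\}\rangle$ give a disjoint decomposition
\[
\langle\mathbb U\setminus\{0\}\rangle\setminus\langle\mathbb U_{r_2}\setminus\{0\}\rangle=\bigl(\langle\mathbb U\setminus\{0\}\rangle\setminus\langle\mathbb U_{r_1}\setminus\{0\}\rangle\bigr)\ \sqcup\ \bigl(\langle\mathbb U_{r_1}\setminus\{0\}\rangle\setminus\langle\mathbb U_{r_2}\setminus\{0\}\rangle\bigr),
\]
and additivity of $\mu^{\mathbb U}$, then restriction to $\mathbb U_{r_1}$, then conformal invariance under the scaling $w\mapsto w/r_1$ (carrying $\mathbb U_{r_1}$ onto $\mathbb U$ and $\mathbb U_{r_2}$ onto $\mathbb U_{r_2/r_1}$) give $M(\mathbb U_{r_2})=M(\mathbb U_{r_1})+M(\mathbb U_{r_2/r_1})$. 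Hence $f(x):=M(\mathbb U_{e^{-x}})$ solves Cauchy's functional equation $f(x_1+x_2)=f(x_1)+f(x_2)$ on $(0,\infty)$; it is nonnegative, nondecreasing (shrinking the disk enlarges the event), and finite, so $f(x)=cx$ with $c=f(1)\in(0,\infty)$ by nondegeneracy, i.e.\ $M(\mathbb U_r)=-c\log r$.

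For general $S$ I would exhaust it from inside: choose $\psi\colon\mathbb U\to S$ conformal with $\psi(0)=0$, $\psi'(0)=\rho$, and set $S_t=\psi(t\mathbb U)$, $t\in(0,1]$, so $S_1=S$. Running the previous step verbatim---disjoint decomposition, additivity, restriction to $S_t$, conformal invariance under $z\mapsto\psi^{-1}(z)/t$, which carries $S_t$ onto $\mathbb U$ and $S_s$ onto $\mathbb U_{s/t}$---gives $M(S_s)=M(S_t)+M(\mathbb U_{s/t})=M(S_t)-c\log(s/t)$ for $0<s<t\le1$; with $t=1$ this says $s\mapsto M(S_s)+c\log s$ is constantly equal to $M(S)$ on $(0,1]$. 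On the other hand, the Koebe distortion theorem for $\psi$ gives $\mathbb U_{\rho s/(1+s)^2}\subset S_s\subset\mathbb U_{\rho s/(1-s)^2}$, so monotonicity of $M$ under inclusion together with $M(\mathbb U_r)=-c\log r$ yields
\[
-c\log(\rho s)+2c\log(1-s)\ \le\ M(S_s)\ \le\ -c\log(\rho s)+2c\log(1+s).
\]
Substituting $M(S_s)=M(S)-c\log s$ and letting $s\downarrow0$ forces $M(S)=-c\log\rho$, which, unwinding the reduction, is the asserted $c\log\Phi'(z)$.

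The hard part will be this last passage to the limit: individually $M(S_s)\to+\infty$ as $s\downarrow0$, because loops merely surrounding $0$ carry infinite total mass, so one cannot simply take a limit in $M(S_s)$. The role of the Koebe distortion theorem---equivalently, of the expansion $\psi(z)=\rho z+O(|z|^2)$ near $0$---is precisely to show that the divergence of $M(S_s)$ agrees with that of $M(\mathbb U_{\rho s})$ up to a vanishing error, which is what identifies the additive constant. Everything else is routine bookkeeping with additivity, conformal invariance, and restriction, plus the elementary fact that a monotone solution of Cauchy's functional equation on a half-line is linear.
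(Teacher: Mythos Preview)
Your derivation of the formula \eqref{E:phiprime} is correct and follows the same strategy as the paper: reduce to $D=\mathbb U$, $z=0$; use additivity plus scale invariance to force $M(\mathbb U_r)=-c\log r$ via Cauchy's functional equation; then compare a general $S$ to concentric disks near the origin and cancel the common divergence. The only packaging difference is that the paper tracks the maximum modulus $\ell_{\max}$ and invokes Taylor's theorem for $\Phi_S$, whereas you exhaust by $S_s=\psi(s\mathbb U)$ and invoke the Koebe growth theorem for $\psi=\Phi_S^{-1}$. Your version has the pleasant feature that the identity $M(S_s)=M(S)-c\log s$ holds exactly, so the limit only involves the explicit error $2c\log(1\pm s)$; but this is a cosmetic variation, not a different idea.

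There is, however, a genuine omission. You prove the formula but never argue that it implies the first sentence of the theorem, that the family $\{\mu^S\}$ is determined up to a scalar. Knowing $M(S)=\mu^{\mathbb U}(\langle\mathbb U\setminus\{0\}\rangle\setminus\langle S\setminus\{0\}\rangle)$ for every simply connected $S\ni0$ does not by itself pin down $\mu^{\mathbb U}(\langle U\rangle)$ for ring domains $U$ with $\langle U\rangle\subset\langle\mathbb U\setminus\{0\}\rangle$, and it is these values that, via Proposition~\ref{P:restriction} together with conformal invariance and restriction, reconstruct the whole family. The paper closes this gap with an additional step: pick a countable dense family of cross-cuts $\{a_k\}$ of the annulus $\{r<|z|<1\}$, observe that $\ell\in\langle\{r\le|z|<1\}\rangle$ iff $\ell$ meets every $a_k$, and use inclusion--exclusion to express $\mu\bigl(\bigcap_{n\le N}\{\ell\cap a_n\neq\emptyset\}\bigr)$ in terms of finitely many quantities $\mu(\ell\nsubseteq D)$, each of which is fixed by your formula. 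You should add this argument (or an equivalent one) to complete the proof of uniqueness.
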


\begin{proof}
Suppose $\{\mu^S\}$ is a conformal restriction family, and denote $\mu$ the restriction of the measure $\mu^{\mathbb C}$ to those loops that stay in the unit disk $\{|z|<1\}$ and surround $0$, i.e. $\mu=\mu^{\langle\{0<|z|<1\}\rangle}$. Then $\mu$ specifies the measure of $\langle U\rangle$ for each annulus $U=\{r<|z|<1\}$, where $0<r<1$. By conformal invariance and restriction, this determines all measures $\mu^S$, cf. Proposition \ref{P:restriction}.
Thus we need to show that $\mu$ is unique up to a multiplicative constant. To this end, for a loop $\ell\subset \mathbb C$, let $\ell_{\max}$ be the maximum modulus of points on the loop, and set
\[
p(x,y)=\mu(\ell_{\max}\in[e^{-y}, e^{-x})),
\]
where $0\le x\le y$. Note that, by definition of $\mu$, only loops which surround $0$ contribute to $p(x,y)$.  Then, by additivity of the measure $\mu$,
\[
p(x,y)+p(y,t)=p(x,t),
\]
whenever $0\le x\le y\le t$, and, by scale invariance of $\mu$,
\[
p(x,y)=p(0,y-x),
\]
for $0\le x\le y$. Furthermore, $p(0,0)=\mu(\emptyset)=0$, and, because $\mu$ is non-trivial, $p(0,x)\in(0,\infty)$ for $x\in(0,\infty)$. Thus, the function $x\in[0,\infty)\mapsto p(0,x)\in[0,\infty)$ is additive,
\[
p(0,x)+p(0,y)=p(0,x+y),
\]
and monotone,
\[
p(0,x)\le p(0,y),\quad \text{if }x\le y.
\]
Hence $p(0,x)=\lambda x$ with
\[
\lambda=\mu(\ell_{\max}\in[e^{-1},1)).
\]

Consider now a simply connected domain $D$ which is a subset of the unit disk and contains the point $0$. Then
\begin{align}\label{E:missD}
\mu(\ell\nsubseteq D)&=\lim_{\epsilon\searrow0}\mu(\ell_{\max}\in[\epsilon,1),\ell\nsubseteq D)\notag\\
&=\lim_{\epsilon\searrow0}(\mu(\ell_{\max}\in[\epsilon,1))-\mu(\ell\subset D,\ell_{\max}\in[\epsilon,1)).
\end{align}
Denote $\Phi_D$ the conformal map from $D$ onto the unit disk $\mathbb U=\{|z|<1\}$, normalized by $\Phi_D(0)=0$ and $\Phi_D'(0)>0$. By restriction and conformal invariance,
\[
\mu(\ell\subset D,\ell_{\max}\in[\epsilon,1))=\mu(\ell\subset\mathbb U,\ell\nsubseteq\Phi_D(\{|z|<\epsilon\})).
\]
If $m=\min\{|\Phi_D(z)|:|z|=\epsilon\}$, $M=\max\{|\Phi_D(z):|z|=\epsilon\}$, and $A\triangle B$ denotes the symmetric difference of two sets $A,B$, then
\begin{align}
&\{\ell\subset\mathbb U,\ell\nsubseteq\Phi_D(\{|z|<\epsilon\})\}\triangle\{\ell_{\max}\in[\Phi_D'(0)\epsilon,1)\}\notag\\
&=\{\ell_{\max}<\Phi_D'(0)\epsilon,\ell\nsubseteq\Phi_D(\{|z|<\epsilon\})\}\notag\\
&\qquad\cup
\{\ell_{\max}\ge\Phi_D'(0)\epsilon,\ell\subset\Phi_D(\{|z|<\epsilon\})\}\notag\\
&\subset\{\ell_{\max}\in[m,\Phi_D'(0)\epsilon)\}\cup\{\ell_{\max}\in[\Phi_D'(0)\epsilon,M)\}\notag\\
&=\{\ell_{\max}\in[m,M)\}.
\end{align}
By Taylor's theorem, both $M$ and $m$ equal $\Phi_D'(0)\epsilon+o(\epsilon)$. Using the scale invariance of $\mu$, it is now easy to see that
\begin{equation}\label{E:minmax}
\mu(\ell_{\max}\in[m,M))=o(1).
\end{equation}
From \eqref{E:missD} and \eqref{E:minmax}, we finally get
\begin{align}\label{E:lastlimit}
\mu(\ell\nsubseteq D)&=\lim_{\epsilon\searrow0}(\mu(\ell_{\max}\in[\epsilon,1))-\mu(\ell_{\max}\in[\Phi_D'(0)\epsilon,1))+o(1))\notag\\
&=\lambda\log\Phi_D'(0).
\end{align}
Equation \eqref{E:phiprime} then follows from conformal invariance.

It remains to show that knowing $\mu$ on the events $\{\ell\nsubseteq D\}$ for all simply connected subdomains of the unit disk containing  $0$ specifies $\mu(\ell\subset\{r<|z|<1\})$ for each $r\in(0,1)$. So, let $r\in(0,1)$ be given. Consider the set of Jordan arcs $a$ in the closed annulus $\{r\le|z|\le 1\}$ whose interior $\mathring{a}$ is contained in the open annulus $\{r<|z|<1\}$ and whose two endpoints lie on different boundary components of the annulus. We call such an arc a cross-cut. By a separability argument (like the one above for loops on Riemann surfaces) it is easy to see that there exists a countable collection of cross-cuts $\{a_k\}$ such that $\ell\in\langle\{r\le|z|<1\}\rangle$ if and only if $\ell\subset \mathbb U$ and the intersection of $\ell$ with $a_k$ is nonempty, for each $k$. Thus, by countable additivity of the measure $\mu$,
\[
\mu(\langle\{r\le|z|<1\}\rangle)=\lim_{N\to\infty}\mu(\bigcap_{n=1}^N\{\ell\cap a_n\neq\emptyset\}).
\]
By the inclusion/exclusion formula, the expression in the limit can be written using terms of the type $\mu(\bigcup_{k=1}^l\{\ell\cap a_{n_k}\neq\emptyset\})$, where $1\le l\le N$, $1\le n_k\le N$. Furthermore, if $D$ is the component of $\mathbb U\backslash(a_{n_1}\cup\dots\cup a_{n_l})$ which contains the point $0$, then
\[
\bigcup_{k=1}^l\{\ell\cap a_{n_k}\neq\emptyset\}=\{\ell\nsubseteq D\}.
\]
Finally, using for example the continuity of $x\mapsto p(0,x)$, it is easy to see that $\mu(\langle\{r\le|z|<1\})=\mu(\langle\{r<|z|<1\}\rangle)$, which concludes our argument.
\end{proof}

\begin{remark}
The expression for $\mu(\ell\nsubseteq D)$ as a limit is reminiscent of the definition of reduced extremal distance, see \cite[Section 4-14]{ahlforsCI}, and reduced modulus, see \cite[Section 3.2]{strebelQD}. In fact, our result shows that, up to a fixed multiplicative constant, $\mu(\ell\nsubseteq D)$ is equal to the reduced modulus of the set of loops which surround $0$, stay in $\mathbb U$, but not in $D$.
\end{remark}

\section{Boundary bubbles encircling an interior point}\label{S:Bub}

We consider chordal $\text{SLE}_{\kappa}$ in the $\mathbb H$ from $x\in\mathbb R$ to $\infty$. Following the notation in \cite{lawlerbook},
let $a=2/\kappa$ and for each $z\in\mathbb H$ denote $g_t(z)$ the solution to the {\em chordal Loewner equation}
\begin{equation}\label{E:CL}
\partial_t g_t(z)=\frac{a}{g_t(z)-U_t}, \quad g_0(z)=z,
\end{equation}
where $U_t$ is a linear Brownian motion with $\mathbb E[U_t^2]=t$. The solution exists up to time $T_z=\sup\{t:\min_{s\in[0,t]}|g_s(z)-U_s|>0\}$
and if $H_t=\{z:T_z>t\}$, then $g_t$ is the conformal map from $H_t$ onto $\mathbb H$ normalized by $g_t(z)=z+at/z+o(1/|z|)$, $z\to\infty$.
Furthermore, with probability 1, the set $K_t=\mathbb H\backslash H_t$ is generated by a curve $\gamma:[0,\infty]\to\overline{\mathbb H}$ in the
sense that $K_t$ is the complement of the unbounded component of $\mathbb H\backslash\gamma[0,t]$, see \cite{rohde.schramm}. We call the random curve $\gamma$ chordal
$\text{SLE}_{\kappa}$ in $\mathbb H$ from $x=U_0$ to $\infty$. If $\kappa\le 4$, then $\gamma$ is a.s. simple,
$\gamma[0,t]=K_t$, and $\gamma(0,\infty)\subset\mathbb H$.

Let $\kappa=8/3$, i.e $a=3/4$, and denote $P_x$ the
distribution of the random curve $\gamma$. By a result of Schramm, \cite{schramm.percolation},
\begin{equation}\label{E:Schramm}
P_x(\gamma\text{ passes right of $i$})=\frac{1}{2}\left(1+\frac{x}{\sqrt{1+x^2}}\right)\equiv q(x).
\end{equation}
It is well known that $\text{SLE}_{8/3}$ satisfies conformal restriction in the sense that if $D$ is a simply connected subdomain of $\mathbb H$ containing boundary neighborhoods of $x$ and $\infty$ in $\mathbb H$, then $P_x(\ \cdot\ |\gamma\subset D)=\Phi_D^* P_x$, where $\Phi_D$ is a conformal map from $D$ onto $\mathbb H$ fixing $x$ and $\infty$, and $\Phi_D^* P$ denoted the pull-back of the measure $P$ under $\Phi_D$. This property implies
\begin{equation}\label{E:cr}
P_x(\gamma\subset D)=\left(\Phi_D'(x)\Phi_D'(\infty)\right)^{5/8},
\end{equation}
where the derivative at $z=\infty$ is calculated relative to the local parameter $u=-1/z$, see \cite{lsw.chordal}. Furthermore, if $D,D'$ are ring domains in $\mathbb H$ of the same modulus containing boundary neighborhoods of $\infty$ in $\mathbb H$, $\Phi_{D,D'}$ the conformal map from $D$ onto $D'$ fixing $\infty$, $D$ containing a boundary neighborhood of $x$ in $\mathbb H$ and $D'$ containing a boundary neighborhood of $\Phi_{D,D'}(x)$ in $\mathbb H$, then
\begin{equation}\label{E:Beffara}
P_x(\gamma\in D)=\left(\Phi_{D,D'}'(x)\Phi_{D,D'}'(\infty)\right)^{5/8}P_{\Phi_{D,D'}(x)}(\gamma\in D').
\end{equation}
Equation \eqref{E:Beffara} was first established by an inclusion/exclusion argument in \cite{beffara}.

\begin{corollary}
If $D$ is a simply connected subdomain of $\mathbb H$ containing $i$ and a boundary neighborhood of $\infty$ in $\mathbb H$, and if $\Phi_D$ is the conformal map from $D$ onto $\mathbb H$ fixing $i$ and $\infty$, then
\begin{equation}\label{E:lim1}
\lim_{x\to-\infty} P_x(\gamma\subset D|\gamma\text{ passes right of $i$})=\Phi_D'(\infty)^2.
\end{equation}
If $D,D'\subset\mathbb H$ are ring domains of the same modulus each containing a boundary neighborhood of $\infty$ in $\mathbb H$ and so that $i\notin D, i\notin D'$, and if $\Phi_{D,D'}$ is the conformal map from $D$ onto $D'$ fixing $\infty$, then
\begin{equation}\label{E:lim2}
\frac{\lim_{x\to-\infty}P_x(\gamma\subset D|\gamma\text{ passes right of $i$})}{\lim_{x\to-\infty}P_x(\gamma\subset D'|\gamma\text{ passes right of $i$})}=\Phi_{D,D'}'(\infty)^2.
\end{equation}
\end{corollary}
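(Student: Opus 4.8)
The plan is to reduce both parts to Schramm's formula \eqref{E:Schramm}, the restriction properties \eqref{E:cr} and \eqref{E:Beffara}, and the elementary expansion
\[
q(t)=\tfrac14\,t^{-2}+O(t^{-4})\qquad(t\to-\infty),
\]
together with the behaviour at the fixed boundary point $\infty$ of the conformal maps involved. Two remarks are used throughout. If $\Psi\colon\Omega\to\mathbb H$ is conformal with $\Psi(\infty)=\infty$ and $x\in\partial\Omega\cap\mathbb R$, then the product $\Psi'(x)\,\Psi'(\infty)$ is unchanged when $\Psi$ is post-composed with an affine automorphism $z\mapsto\alpha z+\beta$ of $\mathbb H$ (this multiplies $\Psi'(x)$ by $\alpha$ and $\Psi'(\infty)$ by $\alpha^{-1}$), so the normalisation of $\Psi$ in \eqref{E:cr} may be chosen freely. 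Moreover, the Laurent expansion of such a $\Psi$ at $\infty$ gives $\Psi(z)=z/\Psi'(\infty)+O(1)$ and $\Psi'(z)\to1/\Psi'(\infty)$ as $z\to\infty$ along $\mathbb R$; since $\Psi$ is orientation-preserving and fixes $\infty$, this also forces $\Psi(x)\to-\infty$ as $x\to-\infty$.

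For \eqref{E:lim1}, write $P_x(\gamma\subset D\mid\gamma\text{ right of }i)=P_x(\gamma\subset D,\ \gamma\text{ right of }i)/q(x)$. Conditionally on $\{\gamma\subset D\}$, the restriction property of $\text{SLE}_{8/3}$ says $\gamma$ has the law of $\Phi_D^{-1}(\tilde\gamma)$, where $\Phi_D\colon D\to\mathbb H$ is the map of the statement (fixing $i$ and $\infty$) and $\tilde\gamma$ is $\text{SLE}_{8/3}$ from $\Phi_D(x)$ to $\infty$ in $\mathbb H$; we are free to use this normalisation of $\Phi_D$, rather than the one fixing $x$, because of the scale- and translation-invariance of SLE. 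As $\Phi_D$ is an orientation-preserving conformal equivalence carrying $\gamma$, its endpoints, and the point $i$ correctly, the event ``$\gamma$ passes right of $i$'' becomes ``$\tilde\gamma$ passes right of $\Phi_D(i)=i$'', whose conditional probability is $q(\Phi_D(x))$ by \eqref{E:Schramm}. Together with \eqref{E:cr} and the normalisation remark this gives
\[
P_x(\gamma\subset D,\ \gamma\text{ right of }i)=\bigl(\Phi_D'(x)\,\Phi_D'(\infty)\bigr)^{5/8}\,q(\Phi_D(x)).
\]
Letting $x\to-\infty$: $\Phi_D(x)\to-\infty$ and $\Phi_D(x)\sim x/\Phi_D'(\infty)$, so $q(\Phi_D(x))/q(x)\to\Phi_D'(\infty)^{2}$, while $\Phi_D'(x)\to1/\Phi_D'(\infty)$ makes $\bigl(\Phi_D'(x)\Phi_D'(\infty)\bigr)^{5/8}\to1$; hence the limit equals $\Phi_D'(\infty)^{2}$.

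For \eqref{E:lim2}, put $A=\mathbb H\setminus D$ and $A'=\mathbb H\setminus D'$; these are compact, connected, contain $i$, and $\Phi_{D,D'}$ maps $A$ onto $A'$ and the outer boundary $\mathbb R\cup\{\infty\}$ onto itself, preserving orientation. Since $i\in A$, $A$ is connected, and $\gamma$ avoids $A$, the event $\{\gamma\subset D,\ \gamma\text{ right of }i\}$ coincides with $\{\gamma\text{ avoids }A\text{ and }A\text{ lies to the left of }\gamma\}$, a property of the pair $(\gamma,D)$ invariant under boundary-respecting homeomorphisms, hence transported by $\Phi_{D,D'}$ to the corresponding event for $(\gamma,D')$. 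Running the inclusion--exclusion argument that establishes \eqref{E:Beffara} in \cite{beffara} while retaining the side constraint ``$A$ on the left'' throughout yields
\[
P_x(\gamma\subset D,\ \gamma\text{ right of }i)=\bigl(\Phi_{D,D'}'(x)\,\Phi_{D,D'}'(\infty)\bigr)^{5/8}\,P_{\Phi_{D,D'}(x)}(\gamma\subset D',\ \gamma\text{ right of }i).
\]
Dividing by $q(x)$ and letting $x\to-\infty$, we have $q(\Phi_{D,D'}(x))/q(x)\to\Phi_{D,D'}'(\infty)^{2}$, $\bigl(\Phi_{D,D'}'(x)\Phi_{D,D'}'(\infty)\bigr)^{5/8}\to1$, and $\Phi_{D,D'}(x)\to-\infty$, so that $P_{\Phi_{D,D'}(x)}(\,\cdot\,)\to\lim_{y\to-\infty}P_y(\,\cdot\,)$; this is \eqref{E:lim2}. (That these limits exist for ring domains is part of the same asymptotic analysis.)

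The asymptotics of $q$ and the $\infty$-expansions of the conformal maps are routine. The point that needs care is the ``enhanced'' restriction statement for the joint event $\{\gamma\subset D,\ \gamma\text{ right of }i\}$. For \eqref{E:lim1} it is just the conditional-law form of conformal restriction for $\text{SLE}_{8/3}$, which sends $i$ to $i$ under the normalised map. For \eqref{E:lim2} it is the refinement of \eqref{E:Beffara} that also records on which side of the hole $A$ the curve passes; here one is effectively using restriction across a doubly connected domain, and the subtlety is to verify that ``passing right of $i$'' is exactly the (conformally invariant) side-of-the-hole quantity that $\Phi_{D,D'}$ preserves --- which holds because $i$ lies in the holes $A$ and $A'$, and $\Phi_{D,D'}$ maps these onto one another.
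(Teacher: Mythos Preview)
Your proof is correct and follows essentially the same approach as the paper's. Both arguments derive the key identity
\[
P_x(\gamma\subset D,\ \gamma\text{ right of }i)=\bigl(\Phi'(x)\,\Phi'(\infty)\bigr)^{5/8}\,P_{\Phi(x)}(\gamma\subset D',\ \gamma\text{ right of }i)
\]
(with the right-hand probability equal to $q(\Phi_D(x))$ in the simply connected case) via restriction/inclusion--exclusion, and then pass to the limit using $\bigl(\Phi'(x)\Phi'(\infty)\bigr)^{5/8}\to1$ and $q(\Phi(x))/q(x)\to\Phi'(\infty)^2$. You simply spell out in more detail the asymptotics (the expansion of $q$ at $-\infty$, the Laurent behaviour $\Psi(z)=z/\Psi'(\infty)+O(1)$, the normalisation-invariance of $\Psi'(x)\Psi'(\infty)$) that the paper leaves as an exercise, and you give a cleaner justification for why ``right of $i$'' is the conformally invariant side-of-the-hole datum preserved by $\Phi_{D,D'}$.
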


\begin{proof}
Both identities follow readily from equations \eqref{E:Schramm}, \eqref{E:cr}, and \eqref{E:Beffara}. We sketch the argument for \eqref{E:lim2}. Using \eqref{E:Beffara} and an inclusion/exclusion argument it follows that
\begin{align}
&P_x(\gamma\subset D,\gamma\text{ passes right of $i$})\notag\\
&=\left(\Phi'(x)\Phi'(\infty)\right)^{5/8}P_{\Phi(x)}(\gamma\subset D',\gamma\text{ passes right of $i$}),\notag
\end{align}
where $\Phi=\Phi_{D,D'}$.
Whence, from \eqref{E:Schramm},
\begin{align}
&P_x(\gamma\subset D|\gamma\text{ passes right of $i$})\notag\\
&=\left(\Phi'(x)\Phi'(\infty)\right)^{5/8}P_{\Phi(x)}(\gamma\subset D'|\gamma\text{ passes right of $i$})\frac{q(\Phi(x))}{q(x)}.\notag
\end{align}
Equation \eqref{E:lim2} now follows from
\[
\lim_{x\to-\infty}\left(\Phi'(x)\Phi'(\infty)\right)^{5/8}=1,
\]
and
\[
\lim_{x\to-\infty}\frac{q(\Phi(x))}{q(x)}=\Phi'(\infty)^2.
\]
\end{proof}

Let $(g_t,t\ge0)$ be the solution to the chordal Loewner equation \eqref{E:CL} for $\text{SLE}_{\kappa}$ and $\kappa\le4$, i.e. $a\ge1/2$. With probability $1$, $i\notin\gamma(0,\infty)$. Thus $x_t=\Re g_t(i)$ and $y_t=\Im g_t(i)$ exist and we may define $f_t(z)$ by
\[
f_t(z)=\frac{g_t(z)-x_t}{y_t}.
\]
Then $f_t$ is the conformal map from $\mathbb H\backslash\gamma(0,t]$ onto $\mathbb H$ fixing $\infty$ and $i$. We also introduce $\theta_t\in[0,\pi]$ via
\[
\cot\theta_t=\frac{x_t-U_t}{y_t}.
\]
Then
\begin{align}
d x_t&=\frac{a}{y_t}\sin^2\theta_t\cot\theta_t\ dt,\notag\\
d y_t&=-\frac{a}{y_t}\sin^2\theta_t\ dt,\notag\\
d\theta_t&=(1-2a)\frac{\sin^4\theta}{y_t^2}\cot\theta\ dt-\frac{\sin^2\theta}{y_t}\ dB_t,\notag
\end{align}
here $U_t=-B_t$, and
\[
\partial_t f_t=\frac{\sin^2\theta}{y_t^2}(1+f_t^2)\cdot\frac{a}{f_t+\cot\theta_t}.
\]
We now change time to $s=s(t)$ such that $ds=\sin^4\theta_t/y_t^2\ dt$. Taking $\kappa=8/3 (a=3/4)$, this leads to the equations
\begin{equation}\label{E:theta}
d\theta_t=-\frac{1}{2}\cot\theta_t\ dt-dB_t=\frac{1}{4}(\tan\theta/2-\cot\theta/2)\ dt-dB_t.
\end{equation}
and
\begin{equation}\label{E:CLi}
\partial_t f_t(z)=\frac{1+f_t(z)^2}{\sin^2\theta_t}\cdot\frac{3/4}{f_t(z)+\cot\theta_t},\quad f_0(z)=z,
\end{equation}
where, by a  slight abuse of notation, we used the same symbols for the time-changed processes. This time-change is familiar. Whereas equation \eqref{E:CL} corresponds to parametrizing the curve $\gamma$ by {\em half-plane capacity from infnity}, equation \eqref{E:CLi} corresponds to parametrizing by {\em conformal radius from i}, see \cite{lawlerPC}. Indeed,
\[
\Upsilon_t\equiv|f_t'(i)|^{-1}=\frac{\Im g_t(i)}{|g_t'(i)|}
\]
is the conformal radius of $\mathbb H\backslash\gamma(0,t]$ from $i$ in the following sense. Denote $h$ the homography
\begin{equation}\label{E:homo}
w\mapsto h(w)=i(1+w)/(1-w),
\end{equation}
mapping the unit disk $\mathbb U$ onto $\mathbb H$ and sending $0$ to $i$. Define $\Phi$ by $\Phi(w)=\Upsilon_t\cdot(h^{-1}\circ f_t\circ h)(w)$. Then $\Phi$ maps the slit disk $h^{-1}(\mathbb H\backslash\gamma(0,t])\subset\mathbb U$ conformally onto $\{|z|<\Upsilon_t\}$, so that $\Phi(0)=0$ and $\Phi'(0)=1$. On the other hand, it follows from \eqref{E:CLi} that $\Upsilon_t$ satisfies the ODE
\[
\dot{\Upsilon}_t=-\frac{3}{2}\Upsilon_t,\quad\Upsilon_0=1,
\]
whence
\begin{equation}\label{E:CR}
\Upsilon_t=e^{-\frac{3}{2}t}.
\end{equation}

We also note that the time-changed process $\theta$ is a Legendre process of index $\nu=-1$, see \cite{FLM}. It behaves at the boundary points $0$ and $\pi$ like a Bessel process of dimension 0. In particular, $\theta$ is absorbed once it reaches the boundary and the time of absorption is the conformal radius in $i$ of the component of $\mathbb H\backslash \gamma(0,\infty)$ that contains $i$.

Using Girsanov's theorem we condition $\gamma$ to pass to the right of $i$. Note that $q(x)=\sin^2(\theta/2)$, if $x=-\cot\theta$. Thus, conditioning introduces an additional drift term $(\partial/\partial\theta)\ln\sin^2\theta/2=\cot\theta/2$ to the stochastic differential equation \eqref{E:theta}, giving
\begin{equation}\label{E:theta-cond}
d\theta_t=\frac{1}{4}\left(\tan\theta/2+3\cot\theta/2\right)dt-dB_t.
\end{equation}
The conditioned process is a {\em generalized Legendre process} of index $(\nu,\mu)=(1,-1)$, and its transition density can be written down explicitly using Jacobi polynomials, see \cite{FLM}. Based on the expression for the density it can be shown that the conditioned process is a Feller process, similar to the proof of the Feller property for Bessel processes in \cite{RY}. The boundary behavior of the conditioned process at $\theta=0$ is that of a 4-dimensional Bessel process, at $\theta=\pi$ it is that of a 0-dimensional Bessel process. In particular, if the conditioned process starts at $\theta_0=0$ and $\tau_\phi=\inf\{t:\theta_t=\phi\}$, $\phi\in(0,\pi]$, then
\begin{equation}\label{E:integrable}
\int_0^{\tau_\phi}\sin^{-2}(\theta_t)\ dt<\infty,\quad a.s.,
\end{equation}
for any $\phi\in(0,\pi)$, see \cite{lawlerbook}.

\begin{theorem}\label{T:bbubble}
Let $\theta_t$ be a process that satisfies the SDE \eqref{E:theta-cond} with $\theta_0=0$ on some probability space $(\Omega,\mathcal F,P)$. Then
the solution $f_t$ to the equation \eqref{E:CLi} for the driving function $\theta_t$ exists on the interval $[0,\tau_\pi)$. Furthermore, there is a simple curve $\gamma:(0,\tau_{\pi})\to\mathbb H$ with
\[
\lim_{t\searrow0}\gamma_t=\lim_{t\nearrow\tau_{\pi}}\gamma_t=\infty,
\]
such that $f_t$ maps $\mathbb H\backslash\gamma(0,t]$ conformally onto $\mathbb H$. If we set $\gamma_0=\infty$, then $\gamma[0,\tau_{\pi})$ is the boundary of a Jordan domain containing $i$. Finally, if $D\subset\mathbb H$ is a simply connected domain that contains $i$ and a boundary neighborhood of $\infty$ in $\mathbb H$, then
\begin{equation}\label{E:bubble1}
P(\gamma\subset D)=\Phi_D'(\infty)^2,
\end{equation}
where $\Phi_D$ is the conformal map from $D$ onto $\mathbb H$ fixing $i$ and $\infty$, and if $D,D'$ are ring domains in $\mathbb H$ of the same modulus containing boundary neighborhoods of $\infty$ in $\mathbb H$ and so that $i\notin D, i\notin D'$, then
\begin{equation}\label{E:bubble2}
P(\gamma\subset D)=\Phi_{D,D'}'(\infty)^2 P(\gamma\subset D'),
\end{equation}
where $\Phi_{D,D'}$ is the conformal map from $D$ onto $D'$ fixing $\infty$.
\end{theorem}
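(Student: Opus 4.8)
The plan is to construct the curve $\gamma$ on $(0,\tau_\pi)$ by localising away from the two singular times, then analyse the endpoints $t\searrow0$ and $t\nearrow\tau_\pi$ separately, and finally obtain the covariance identities \eqref{E:bubble1}--\eqref{E:bubble2} by coupling the $\theta_0=0$ evolution with the conditioned $\text{SLE}_{8/3}$ evolutions started from $\theta_0=\phi>0$ and passing to the limit in the Corollary.

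First I would fix $t_0\in(0,\tau_\pi)$ and argue conditionally on $(\theta_s)_{s\le t_0}$. On $\{t_0<\tau_\pi\}$ we have $\theta_{t_0}\in(0,\pi)$, and by the discussion preceding the theorem the shifted process $(\theta_{t_0+s})_{s\ge0}$ is the driving function, in the conformal-radius-from-$i$ parametrisation, of chordal $\text{SLE}_{8/3}$ started from $-\cot\theta_{t_0}$ and conditioned to pass to the right of $i$. Since conditioning the a.s.\ simple $\text{SLE}_{8/3}$ curve on the positive-probability event $\{\gamma\text{ passes right of }i\}$ again yields an a.s.\ simple curve avoiding $\mathbb R$ on $(0,\infty)$, the map $f_{t_0+s}\circ f_{t_0}^{-1}$ is, conditionally, the Loewner map of such a curve; composing, $f_t$ for $t\in[t_0,\tau_\pi)$ is generated by a simple curve $\gamma$ in $\mathbb H\setminus\gamma(0,t_0]$. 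Letting $t_0\searrow0$, I would use the a priori bound \eqref{E:integrable} to estimate the right-hand side of \eqref{E:CLi} over $(0,t_0]$; together with $\Upsilon_{t_0}=e^{-3t_0/2}\to1$ (from \eqref{E:CR}) and the normalisation $f_{t_0}(i)=i$, $f_{t_0}(\infty)=\infty$, this forces $f_{t_0}\to\operatorname{id}$ locally uniformly and the omitted hull $\gamma(0,t_0]$ to shrink, so the pieces glue to a simple curve on $(0,\tau_\pi)$. Since $\theta_t\to0$ as $t\searrow0$, the driving point $-\cot\theta_t\to-\infty$ and the shrinking hull sits near $\infty$ in $\overline{\mathbb H}$, which gives $\lim_{t\searrow0}\gamma_t=\infty$.

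For the behaviour at $\tau_\pi$ I would use the harmonic-measure meaning of $\theta_t$. Because $\cot\theta+i=e^{i\theta}/\sin\theta$ has argument $\theta$ for $\theta\in(0,\pi)$, the map $f_t$ sends the left side of $\gamma(0,t]$ together with the left half of $\mathbb R$ onto the ray $(-\infty,-\cot\theta_t)$, whose harmonic measure from $f_t(i)=i$ in $\mathbb H$ equals $\theta_t/\pi$; hence the harmonic measure from $i$ of the right side of the curve together with the right half of $\mathbb R$ is $1-\theta_t/\pi$. As $t\nearrow\tau_\pi$ this tends to $0$, so $\gamma(0,t]$ closes up around $i$; combining this with $\tau_\pi<\infty$ a.s.\ (the $0$-dimensional Bessel behaviour of $\theta$ at $\pi$, see \cite{FLM,lawlerbook}) and $\Upsilon_{\tau_\pi}=e^{-3\tau_\pi/2}>0$, the domains $\mathbb H\setminus\gamma(0,t]$ decrease to a Jordan domain $B\ni i$ of positive conformal radius from $i$, with $\partial B=\gamma[0,\tau_\pi]\cup\{\infty\}$, and in particular $\gamma_t\to\infty$ as $t\nearrow\tau_\pi$. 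Making this last step fully rigorous---showing that the curve closes by returning to $\infty$ rather than pinching at a finite boundary point, and that $B$ is genuinely a Jordan domain---is where I expect the main technical effort, and I would supply it via Koebe distortion bounds for $f_t$ near $i$ together with the fact that $\gamma(0,\tau_\pi)\subset\mathbb H$.

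Finally, for \eqref{E:bubble1} and \eqref{E:bubble2} I would drive \eqref{E:theta-cond} for every starting point $\theta_0=\phi\ge0$ by one and the same Brownian motion. By the Feller property of the conditioned process, $\theta^{(\phi)}\to\theta^{(0)}$ uniformly on $[0,\tau_\pi-\varepsilon]$ as $\phi\searrow0$, so the corresponding curves $\gamma^{(\phi)}$ converge to $\gamma$ uniformly on compact time intervals. For $\phi>0$, $\gamma^{(\phi)}$ is chordal $\text{SLE}_{8/3}$ from $-\cot\phi$ conditioned to pass right of $i$, and its part up to the closing time $\tau_\pi^{(\phi)}$ is the approximating bubble, while its post-closing tail starts and stays near the point where it closed, which escapes to $\infty$ as $\phi\searrow0$. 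Thus for $D\subset\mathbb H$ simply connected with $i\in D$ and containing a boundary neighbourhood of $\infty$ in $\mathbb H$ (hence a boundary neighbourhood of $-\cot\phi$ for small $\phi$), the ends of the curves near times $0$ and $\tau_\pi$ automatically lie in $D$ while the compact middle portions converge, and since $\gamma(0,\tau_\pi)\subset\mathbb H$ gives $P(\gamma\subset\partial D)=0$ we obtain
\[
P(\gamma\subset D)=\lim_{\phi\searrow0}P\bigl(\gamma^{(\phi)}\subset D\bigr)=\lim_{x\to-\infty}P_x\bigl(\gamma\subset D\mid\gamma\text{ passes right of }i\bigr)=\Phi_D'(\infty)^2
\]
by \eqref{E:lim1}. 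The identity \eqref{E:bubble2} follows in the same way from \eqref{E:lim2}, now with $D,D'$ ring domains avoiding $i$.
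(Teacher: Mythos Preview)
Your construction of the simple curve via the Markov/flow decomposition is essentially the paper's argument: fix a small positive time $\delta$, identify the shifted driving function with that of a conditioned $\text{SLE}_{8/3}$ from the finite point $-\cot\theta_\delta$, conclude the hull past time $\delta$ is a simple curve, and let $\delta\to 0$. Your harmonic-measure discussion of the behaviour at $\tau_\pi$ adds justification the paper leaves implicit.

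For \eqref{E:bubble1}--\eqref{E:bubble2} your route genuinely differs. You propose to couple the processes $\theta^{(\phi)}$, $\phi\ge 0$, with a common Brownian motion, deduce convergence of the curves $\gamma^{(\phi)}\to\gamma$, and pass to the limit in \eqref{E:lim1}--\eqref{E:lim2}. The paper instead runs an internal martingale argument: with $M_\delta = P(\gamma\subset D\mid \gamma[0,\delta\wedge\tau_\phi])$, the Markov property gives
\[
M_\delta = 1\{\gamma(0,\delta\wedge\tau_\phi)\subset D\}\,\bigl(\Psi_\delta'(x_\delta)\Psi_\delta'(\infty)\bigr)^{5/8}\,\frac{q(\Psi_\delta(x_\delta))}{q(x_\delta)},
\]
where $x_\delta=-\cot\theta_{\delta\wedge\tau_\phi}$ and $\Psi_\delta$ is the normalised map from $f_{\delta\wedge\tau_\phi}(D)$ onto $\mathbb H$ fixing $i$ and $\infty$. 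Carath\'eodory convergence $\Psi_\delta\to\Phi_D$ yields $M_\delta\to\Phi_D'(\infty)^2$, while backward martingale convergence yields $M_\delta\to P(\gamma\subset D)$. No coupling and no curve-level convergence is needed.

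Your scheme is plausible but carries more overhead: continuity of the SDE flow in the initial condition near the singular boundary $\theta=0$, passage from driving-function convergence to convergence of Loewner curves, and the endpoint bookkeeping. One point is misstated: there is no ``post-closing tail'' of $\gamma^{(\phi)}$ to discard. In the conformal-radius parametrisation the process is absorbed at $\theta=\pi$, and this absorption corresponds to the curve arriving at $\infty$; for $\phi>0$ the path $\gamma^{(\phi)}[0,\tau_\pi^{(\phi)}]$ already \emph{is} the full conditioned $\text{SLE}_{8/3}$ chord from $-\cot\phi$ to $\infty$. This does not break your argument, but it simplifies it, and the paper's martingale computation reaches the same limit with less machinery.
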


\begin{proof}
The existence of the solution $(f_s:s\in[0,t])$ to \eqref{E:CLi} for $t\in[0,\tau_{\pi})$ follows from \eqref{E:integrable}. Just as for the chordal Loewner equation \eqref{E:CL}, there is a growing, relatively closed set $K_t$ such that $f_t$ maps $\mathbb H\backslash K_t$ conformally onto $\mathbb H$. It is also clear from \eqref{E:CLi} that $f_t(i)=i$. For $\phi\in(0,\pi)$, write
\[
K_{\tau_{\phi}}=K_{\delta\wedge\tau_{\phi}}\cup f_{\delta\wedge\tau_{\phi}}^{-1}(f_{\delta\wedge\tau_{\phi}}(K_{\tau_{\phi}}\backslash K_{\delta\wedge\tau_{\phi}})).
\]
It follows from the flow property that if $\tilde{K}_{\tau_{\phi}-\delta\wedge\tau_{\phi}}\equiv f_{\delta\wedge\tau_{\phi}}(K_{\tau_{\pi}}\backslash K_{\delta\wedge\tau_{\phi}})$, and $\tilde{f}_t$ is the solution of \eqref{E:CLi} with driving function $\tilde{\theta}_t=\theta_{t+\delta\wedge\tau_{\phi}}$, then $\tilde{f}_{\tau_{\phi}-\delta\wedge\tau_{\phi}}$ is the conformal map from $\mathbb H\backslash\tilde{K}_{\tau_{\phi}-\delta\wedge\tau_{\phi}}$ onto $\mathbb H$, that leaves $i$ and $\infty$ fixed. From the Markov property of $\theta_t$ it follows that $\tilde{\theta}_t$ is the driving function of an $\text{SLE}_{8/3}$, started at $x=-\cot\tilde{\theta}_0$, conditioned to pass right of $i$. Hence, w.p. 1, $\tilde{K}_{\tau_{\phi}-\delta\wedge\tau_{\phi}}$ is given by a simple curve. Then $K_{\tau_{\phi}}\backslash K_{\delta\wedge\tau_{\phi}}$, as the conformal image of a simple curve, is given by a simple curve, and, letting $\delta\to0$, we get, w.p.1, $K_{\tau_\phi}$ is given by a simple curve.

We denote this curve by $\gamma$. To prove \eqref{E:bubble1}, consider the bounded martingale
\begin{align}
M_{\delta}&\equiv P(\gamma\subset D|\gamma[0,\delta\wedge\tau_\phi])\notag\\
&=1\{\gamma(0,\delta\wedge\tau_\phi)\subset D\}\mathbb E^P[1\{\gamma[\delta\wedge\tau_\phi,\tau_\pi)\subset D\}|\gamma[0,\delta\wedge\tau_\phi]].\notag
\end{align}
Then, a.s., $M_{\delta}\to M_0=P(\gamma\subset D)$ as $\delta\to0$. On the other hand, applying the flow and Markov property as in the preceding paragraph, we have
\begin{align}
&\mathbb E^P[1\{\gamma[\delta\wedge\tau_\phi,\tau_\pi)\subset D\}|\gamma[0,\delta\wedge\tau_\phi]]\notag\\
&=P_{x_{\delta}}(\gamma\subset f_{\delta\wedge\tau_\phi}(D))\notag\\
&=(\Psi_\delta'(x_\delta)\Psi_\delta'(\infty))^{5/8}q(\Psi_\delta(x_{\delta}))/q(x_\delta),\notag
\end{align}
where $x_{\delta}=-\cot\theta_{\delta\wedge\tau_\phi}$ and $\Psi_\delta$ is the conformal map from $f_{\delta\wedge\tau_\phi}(D)$ onto $\mathbb H$ fixing $i$ and $\infty$. It is straightforward to see that $(\Psi_\delta,\delta>0)$ is a normal family of conformal maps and $\Psi_\delta\to\Phi_D$ as $\delta\to0$ in the sense of Cartheodory convergence. It then follows that
\[
\Psi_\delta'(x_\delta)\Psi_\delta'(\infty)\to1,\quad q(\Psi_\delta(x_{\delta}))/q(x_\delta)\to\Phi_D'(\infty)^2
\]
as $\delta\to0$. Since, a.s., $1\{\gamma(0,\delta\wedge\tau_\phi)\subset D\}\to1$ as $\delta\to0$, equation \eqref{E:bubble1} now follows.

The proof for \eqref{E:bubble2} is analogous and is omitted.
\end{proof}

We call the curve $\gamma$, under $P$, a {\em boundary bubble attached at $\infty$ and encircling $i$}, and denote its distribution by $Bub^{(i)}$. Equations \eqref{E:bubble1} and \eqref{E:bubble2} then say that $Bub^{(i)}$ is a measure on boundary bubbles which transforms like a quadratic differential.

\begin{remark}
Using Schramm's formula for chordal $\text{SLE}_\kappa$ we can construct in the same way a bubble measure for other values of $\kappa\le4$. However, for $\kappa\neq8/3$ this measure does not transform like a quadratic differential. The method we employ in the next section to construct a loop measure from a bubble measure thus does not extend to other values of $\kappa$.
\end{remark}

\section{loop measure}\label{S:E}

Denote $U$ a ring domain with boundary components $C_1,C_2$. A {\em cross-cut of $U$ from $C_1$ to $C_2$} is a homeomorphism $\gamma:(0,T)\to U$ for which $\gamma_0\equiv \lim_{t\searrow0}\gamma_t$ and $\gamma_T\equiv\lim_{t\nearrow T}\gamma_t$ exist and belong to $C_1$ and $C_2$, respectively. Then $\mod(U\backslash\gamma[0,t])$ is a continuous, strictly decreasing function of $t$, taking the value $a=\mod(U)$ for $t=0$, and $0$ for $t=T$. In particular, we may parameterize the cross-cut by conformal modulus, so that
\[
\mod(U\backslash\gamma[0,t])=a-t,\quad t\in[0,a].
\]

For $a\ge0$, let $\rho=e^a$ and set
\[
U_a=\mathbb H\backslash\{z:\left|z-i\frac{\rho^2+1}{\rho^2-1}\right|\le\frac{2\rho}{\rho^2-1}\}.
\]
Then $z\mapsto(i+z)/(i-z)$ maps $U_a$ conformally onto $\{1<|z|<\rho\}$, so that $\mod(U_a)=a$. Let $\gamma$ be a cross-cut of $U_a$ starting from $\infty$, parameterized by conformal modulus. Let $\varphi_{t,a}$ be the conformal map from $U_a\backslash\gamma[0,t]$ onto $U_{a-t}$ so that $\varphi_{t,a}(\gamma_t)=\infty$. We define a measure $\mu_a$ by
\begin{equation}\label{E:defmu}
\mu_a=\int_0^a \varphi_{t,a}^* (Bub^{(i)}\upharpoonright\{\ell\subset U_{a-t}\cup\{\infty\}\})\ dt.
\end{equation}

\begin{theorem}\label{T:WM}[Werner measure]
If $U$ is a ring domain in $U_a$ so that $\langle U\rangle\subset\langle U_a\rangle$, then
\begin{equation}\label{E:main}
\mu_a(\langle U\rangle)=\int_0^{\mod(U)}Bub^{(i)}(\ell\subset U_b\cup\{\infty\})\ db.
\end{equation}
In particular, the definition of $\mu_a$ is independent of the cross-cut $\gamma$ used in \eqref{E:defmu}, $\mu_a(\langle U\rangle)=\mu_{a'}(\langle U\rangle)$ for any $a'> a$, and there exists a unique conformal restriction family $\{\mu^S\}$ with $\mu^{\langle U_a\rangle}=\mu_a$ for each $a>0$.
\end{theorem}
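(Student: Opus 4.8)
\emph{Overall plan.} I would establish the integral formula \eqref{E:main} first; the three remaining assertions then follow from it together with Proposition \ref{P:restriction} and Theorem \ref{T:uniqueness}. Fix $U$ with $\langle U\rangle\subset\langle U_a\rangle$, put $m=\mod(U)$, and fix a cross-cut $\gamma$ of $U_a$ emanating from $\infty$. For $t\in[0,a]$ write $\Omega_t=U_a\backslash\gamma[0,t]$, a ring domain of modulus $a-t$, set $D_t=U\wedge\Omega_t$, and let $W_t=\varphi_{t,a}(D_t)\subset U_{a-t}$; this is a ring domain with $i\notin W_t$ and $\infty\in\partial W_t$. The first step is to unwind the pushforward in \eqref{E:defmu}: a $Bub^{(i)}$-bubble $\ell\subset U_{a-t}\cup\{\infty\}$ satisfies $\varphi_{t,a}^{-1}(\ell)\in\langle U\rangle$ precisely when $\ell\subset W_t\cup\{\infty\}$. (Here one uses that $Bub^{(i)}$-a.e.\ bubble surrounds $i$, hence the hole of $U_{a-t}$, so the requirement that $\varphi_{t,a}^{-1}(\ell)$ separate $\partial U$ reduces to containment in $W_t\cup\{\infty\}$; the only care needed with loop spaces is that $\varphi_{t,a}^{-1}(\ell)$ meets $\gamma[0,t]$ only at the interior point $\gamma_t$, so it is a loop in $\langle U_a\rangle$, and that $\langle U\rangle\cap\langle\Omega_t\rangle=\langle D_t\rangle$.) Thus \eqref{E:defmu} becomes
\[
\mu_a(\langle U\rangle)=\int_0^a Bub^{(i)}\bigl(\ell\subset W_t\cup\{\infty\}\bigr)\,dt ,
\]
where the integrand vanishes unless $\gamma_t\in\overline U$, and on the interval where $\gamma_t\in\overline U$ the number $h(t):=\mod(W_t)$ is continuous and strictly decreasing from $m$ to $0$.

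\emph{Change of variables.} Since $i\notin W_t$ and $i\notin U_{h(t)}$, I would apply \eqref{E:bubble2} to the conformal map $\Phi_t\colon W_t\to U_{h(t)}$ fixing $\infty$, obtaining $Bub^{(i)}(\ell\subset W_t\cup\{\infty\})=\Phi_t'(\infty)^2\,Bub^{(i)}(\ell\subset U_{h(t)}\cup\{\infty\})$. Substituting $b=h(t)$ turns the integral above into
\[
\mu_a(\langle U\rangle)=\int_0^m \Phi_{t(b)}'(\infty)^2\,\Bigl|\frac{dt}{db}\Bigr|\,Bub^{(i)}\bigl(\ell\subset U_b\cup\{\infty\}\bigr)\,db ,
\]
so \eqref{E:main} follows at once from
\begin{equation}\label{E:keyODE}
\frac{d}{dt}\,\mod(W_t)=-\,\Phi_t'(\infty)^2 .
\end{equation}

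\emph{The main obstacle: proving \eqref{E:keyODE}.} The point $\gamma_t$ is a boundary point -- the slit tip -- of both $D_t$ and $\Omega_t$, and the two domains coincide in a half-plane-like neighbourhood of it. Attaching $\gamma[t,t+\varepsilon]$ lowers $\mod(\Omega_t)$ by exactly $\varepsilon$, by our choice of parametrization of $\gamma$. Map $D_t$ and $\Omega_t$ to the standard annuli $U_{h(t)}$ and $U_{a-t}$ so that $\gamma_t\mapsto\infty$, by $\beta_{D_t}$ and $\varphi_{t,a}$ respectively; the key input is that the decrease of the modulus of a standard annulus $U_b$ caused by a vanishingly small slit attached at $\infty$ equals, to leading order, a \emph{universal} constant -- independent of $b$, because every $U_b$ coincides with $\mathbb H$ near $\infty$ -- times the half-plane capacity at $\infty$ of the slit. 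Combining this with the scaling of half-plane capacity by the square of the conformal derivative, and with the chain rule $\beta_{D_t}'(\gamma_t)=\Phi_t'(\infty)\,\varphi_{t,a}'(\gamma_t)$, gives
\[
-\frac{d}{dt}\,\mod(W_t)=\left(\frac{\beta_{D_t}'(\gamma_t)}{\varphi_{t,a}'(\gamma_t)}\right)^{2}=\Phi_t'(\infty)^2 ,
\]
which is \eqref{E:keyODE}. This is exactly the heuristic ``$dt=\Phi'(x)^2\,ds$'' from the introduction made precise, and the single genuinely technical point is justifying the two capacity asymptotics, with their $b$-independent constants; I would do this by a local analysis near $\infty$, modelled on the standard asymptotics for annular Loewner evolution.

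\emph{The three consequences.} The right-hand side of \eqref{E:main} does not mention the cross-cut $\gamma$. As the sets $\langle U\rangle$ with $\langle U\rangle\subset\langle U_a\rangle$ form an intersection-stable generating family for $\sigma^{\langle U_a\rangle}$ containing $\langle U_a\rangle$ itself, and as $\mu_a(\langle U_a\rangle)=\int_0^a Bub^{(i)}(\ell\subset U_b\cup\{\infty\})\,db\le a<\infty$ (since $Bub^{(i)}$ is a probability measure), the finite measure $\mu_a$ is pinned down by those values and is therefore independent of $\gamma$. For $a'>a$ we have $\langle U\rangle\subset\langle U_a\rangle\subset\langle U_{a'}\rangle$, so \eqref{E:main} applied to both $a$ and $a'$ gives $\mu_a(\langle U\rangle)=\mu_{a'}(\langle U\rangle)$ on that family, hence $\mu_a$ and $\mu_{a'}$ agree on $\sigma^{\langle U\rangle}$. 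Finally, \eqref{E:main} shows $\mu_a(\langle U\rangle)$ depends on $U$ only through $\mod(U)$, so $\mu_a$ is invariant under the conformal automorphisms of $U_a$; hence for every ring domain $U$ of finite modulus in every Riemann surface $S$ one may unambiguously set $\mu^{\langle U\rangle}$ equal to the pull-back of $\mu_{\mod(U)}\!\upharpoonright\!\langle U_{\mod(U)}\rangle$ under any conformal map $U\to U_{\mod(U)}$. These measures are finite and satisfy the compatibility hypothesis of Proposition \ref{P:restriction} (every overlap value equals $\int_0^{\mod(W)}Bub^{(i)}(\ell\subset U_b\cup\{\infty\})\,db$), so they glue to a measure $\mu^S$ on each $S$; conformal invariance and restriction of $\{\mu^S\}$ are then immediate because everything is expressed through moduli, and nondegeneracy holds because $Bub^{(i)}(\ell\subset U_b\cup\{\infty\})>0$ for all $b>0$. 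Thus $\{\mu^S\}$ is a conformal restriction family with $\mu^{\langle U_a\rangle}=\mu_a$; uniqueness is Theorem \ref{T:uniqueness}, the free multiplicative constant being forced to $1$ by $\mu^{\langle U_a\rangle}=\mu_a\neq 0$.
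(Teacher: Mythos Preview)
Your approach is the same as the paper's: rewrite $\mu_a(\langle U\rangle)$ as $\int_0^a Bub^{(i)}(\ell\subset W_t\cup\{\infty\})\,dt$, apply \eqref{E:bubble2}, and change variables via $db=-\Phi_t'(\infty)^2\,dt$ (the paper cites this from the annular Loewner theory in \cite{bauerSLE83} rather than sketching it as you do). The one slip is your assumption that $\{t:\gamma_t\in\overline U\}$ is a single interval on which $h(t)=\mod(W_t)$ is strictly decreasing from $m$ to $0$. For a general cross-cut this fails: $\gamma$ may make several excursions into $U$ before finally crossing it, and some excursions may land in simply connected pieces already severed by earlier ones, where the integrand vanishes and $h$ stays constant; the substitution $b=h(t)$ is then not a bijection. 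The paper is more careful here, restricting to those $t$ with $\gamma_t\in U$ and $\gamma(0,t]$ disjoint from some loop in $\langle U\rangle$; this set is a countable union of open subintervals, the last being the actual crossing. Since $h$ is globally non-increasing from $m$ to $0$ and constant precisely where the integrand vanishes, the change of variables on each subinterval concatenates to give \eqref{E:main}. Your treatment of the three consequences is more thorough than the paper's one-line appeal to Proposition~\ref{P:restriction}, and is correct.
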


\begin{proof}
Let $\tilde{U}_t=\varphi_t(U\backslash\gamma(0,t])$. Then, by definition,
\begin{equation}\label{E:int}
\mu_a(\langle U\rangle)=\int_0^a Bub^{(i)}(\ell\subset\tilde{U}_t\cup\{\infty\})\ dt.
\end{equation}
Note that if $U$ is strictly contained in $U_a$, then the integrand is nonzero only for a part of the interval of integration. In fact, without changing the value of the integral we may restrict the integration to those $t$ for which $\gamma_t\in U$ and there exists a loop $\ell\in\langle U\rangle$ so that $\gamma(0,t]\cap\ell=\emptyset$. This set of times $t$ consists of at most countably many open subintervals of $(0,a)$. If $(s,s')$ is such a subinterval, then $\gamma(s,s')$ is an open Jordan arc in $U$ which begins and ends on the boundary of $U$. Except for the last (with respect to the natural time ordering) of these Jordan arcs they all begin and end on the same boundary component. The last Jordan arc is a crossing of $U$.

Suppose now that $t$ is contained in one of these subintervals, say $(s,s')$.
If $b$ is the conformal modulus of $\tilde{U}_t$, denote $\psi$ the conformal map from $\tilde{U}_t$ onto $U_b$, fixing $\infty$. By \eqref{E:bubble2},
\begin{equation}\label{E:sub1}
Bub^{(i)}(\ell\subset\tilde{U}_t\cup\{\infty\})=Bub^{(i)}(\ell\subset U_b\cup\{\infty\})\ \psi'(\infty)^2.
\end{equation}
Furthermore, for $t\in(s,s')$, $b$ is a strictly decreasing function of $t$. In fact,
\begin{equation}\label{E:sub2}
db=-\psi'(\infty)^2\ dt,
\end{equation}
as follows from the Loewner equation in doubly connected domains, see \cite[Theorem 3.2]{bauerSLE83}. Equations \eqref{E:int}, \eqref{E:sub1}, and \eqref{E:sub2} now imply \eqref{E:main}. The two statements following \eqref{E:main} are immediate consequences of \eqref{E:main}, and the existence of a conformal restriction family follows from Proposition \ref{P:restriction}.
\end{proof}

\begin{remark}
As the proof shows, it is the fact that the bubble measure transform like a quadratic differential which renders the integral in the definition of $\mu_a$ independent of the choice of cross-cut. A more conceptual argument is as follows: Quadratic differentials span the cotangent space of the moduli space, which is 1-dimensional for ring domains. The integration in the definition of $\mu_a$ is the pairing of a chain (the path in moduli space induced by the cross-cut) and a co-chain (the pull-back of the measure-valued quadratic differential given by the bubble measure). This pairing is well-defined and gives the same value (measure) for all chains in the same homology class. The cross-cuts of a ring domain induce the same chain in moduli space.
\end{remark}

\section{The conformal radius of boundary bubbles}\label{S:CR}

Because of \eqref{E:main} it would be of great interest to have an explicit formula for $f(a)\equiv Bub^{(i)}(\ell\subset U_a\cup\{\infty\})$. Based on physical arguments using the $O(n)$-model, Cardy conjectured a formula for the total mass $|\mu_a|$ from which the desired formula would follow by differentiation. On the other hand, different mathematical proofs establishing the asymptotics of $f$ as $a\searrow0$ have been given in \cite{wernerSAL} and  \cite{bauerSLE83}. The asymptotics these authors found agree with the asymptotics that would follow from Cardy's formula. In this section we compute the distribution of the conformal radius of boundary bubbles under the measure $Bub^{(i)}$ and deduce upper and lower bounds for $f(a)$ from it.

For a boundary bubble $\ell$ attached at infinity and encircling $i$ in $\mathbb H$, denote $I(\ell)$ the interior of the bubble, i.e. the component of $\mathbb H\backslash\ell$ containing $i$, and $r(\ell)$ the conformal radius of $I(\ell)$ from $i$.

\begin{lemma}\label{L:bubble}
Let $q=e^{-a}$. For a boundary bubble $\ell$ attached at infinity and encircling $i$ the following holds:
\begin{itemize}
\item If $r(\ell)\ge 4q$, then $\ell\subset U_a\cup\{\infty\}$.
\item If $\ell\subset U_a\cup\{\infty\}$, then $r(\ell)\ge q$.
\end{itemize}
\end{lemma}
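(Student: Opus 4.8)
The plan is to transport the whole picture to the unit disk by the homography $h$ of \eqref{E:homo}, and then read off both statements from the Koebe $1/4$ theorem and the Schwarz lemma. Write $\rho=e^{a}=q^{-1}$. Recall that, in the sense explained around \eqref{E:homo}--\eqref{E:CR}, the conformal radius $r(\ell)$ of $I(\ell)$ from $i$ equals the ordinary conformal radius from $0$ of the Jordan domain $\widetilde I:=h^{-1}(I(\ell))\subset\mathbb U$, and that (by Theorem \ref{T:bbubble}) $\widetilde\ell:=h^{-1}(\ell)$ is a Jordan curve through $h^{-1}(\infty)=1$ with $\partial\widetilde I=\widetilde\ell$ and $0=h^{-1}(i)\in\widetilde I$. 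The first step is to identify the image of the removed disk: the set $\overline D=\{z:\,|z-i(\rho^{2}+1)/(\rho^{2}-1)|\le 2\rho/(\rho^{2}-1)\}$, so that $U_{a}=\mathbb H\setminus\overline D$, is a round disk symmetric about the imaginary axis, meeting it in the segment from $i(\rho-1)/(\rho+1)$ to $i(\rho+1)/(\rho-1)$; since $h^{-1}$ is a M\"obius map carrying the imaginary axis to $(-1,1)$ and sending $i(\rho-1)/(\rho+1)$ to $-1/\rho$ and $i(\rho+1)/(\rho-1)$ to $1/\rho$, it carries $\overline D$ onto $\overline{B(0,q)}$, so that $h^{-1}(U_{a})=\mathbb U\setminus\overline{B(0,q)}=\{q<|w|<1\}$.

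Granting this, $\ell\subset U_{a}\cup\{\infty\}$ means exactly that $\widetilde\ell$ is disjoint from $\overline{B(0,q)}$; and since $\overline{B(0,q)}$ is connected, contains $0\in\widetilde I$, while $\mathbb U\setminus\widetilde\ell$ has just the two components $\widetilde I$ and its complementary region, this is in turn equivalent to $\overline{B(0,q)}\subseteq\widetilde I$. For the first bullet, suppose $r(\ell)\ge 4q$: applying the Koebe $1/4$ theorem to a conformal map of $\mathbb U$ onto $\widetilde I$ fixing $0$ and whose derivative at $0$ has modulus $r(\ell)$, we get $B(0,r(\ell)/4)\subseteq\widetilde I$, hence $B(0,q)\subseteq\widetilde I$, and therefore $\ell\subset U_{a}\cup\{\infty\}$. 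For the second bullet, if $\ell\subset U_{a}\cup\{\infty\}$ then $B(0,q)\subseteq\widetilde I$, so composing the inclusion with the uniformizing maps of $B(0,q)$ and of $\widetilde I$ produces a holomorphic self-map of $\mathbb U$ fixing $0$; the Schwarz lemma then yields monotonicity of the conformal radius under inclusion, whence $r(\ell)$ is at least the conformal radius of $B(0,q)$ from $0$, which is $q$.

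I expect the one genuinely delicate point to be the opening identification: checking that $h^{-1}$ sends $U_{a}$ onto the round annulus $\{q<|w|<1\}$ (equivalently, that the modulus of $U_{a}$ is $a$), and --- more conceptually --- keeping track of the fact that the conformal radius from $i$ used throughout the paper is the one computed in the unit disk via $h$; that is precisely what makes Koebe's constant $\tfrac14$ come out as the clean factor $4$. After that the argument is Koebe and Schwarz applied essentially verbatim. Two minor loose ends remain: the borderline case $r(\ell)=4q$ with $\widetilde\ell$ touching $\partial B(0,q)$, which one disposes of by noting that the Koebe-extremal (slit) configuration cannot occur because $\widetilde I\subset\mathbb U$ is bounded (or simply that such sets enter the rest of the paper only through the integral \eqref{E:main}); and the remark that the first bullet has content only when $4q<1$, i.e. $a>\log 4$, since $\widetilde I\subsetneq\mathbb U$ forces $r(\ell)<1$.
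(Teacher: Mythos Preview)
Your proof is correct and follows essentially the same approach as the paper: transport via $h$ to the unit disk, apply the Koebe $1/4$ theorem for the first bullet, and use monotonicity of the conformal radius (which you derive from the Schwarz lemma) for the second. The paper's proof is terser---it does not spell out the identification $h^{-1}(U_a)=\{q<|w|<1\}$ and simply cites monotonicity---but the core argument is identical; your handling of the borderline case $r(\ell)=4q$ via the equality case of Koebe is a nice touch that the paper leaves implicit.
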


\begin{proof}
Denote $f$ the unique conformal map from $I(\ell)$ onto $\mathbb H$ keeping $i$ and $\infty$ fixed. Then $|f'(i)|(h^{-1}\circ f^{-1}\circ h)$ is conformal on the unit disk, fixes $0$, and has derivative 1 there. Thus, by the Koebe 1/4 Theorem, the image contains the disk $\{|z|<1/4\}$. Since $r(\ell)=|f'(i)|^{-1}$, it follows that
\[
h^{-1}(I(\ell))\supset\{|z|<r(\ell)/4\}.
\]
Whence $r(\ell)\ge4q$ implies $I(\ell)\supset h(\{|z|<q\})$ and we get the first statement.

The second statement is a consequence of the monotonicity of the conformal radius: If $I(\ell)\subset I(\ell')$, then $r(\ell)\le r(\ell')$.
\end{proof}

\begin{theorem}\label{T:bubble}
For $q\in (0,1)$, we have
\[
Bub^{(i)}(\{\ell: r(\ell)\ge q\})=\prod_{n=1}^{\infty}\left(1-q^{\frac{2n}{3}}\right)^3.
\]
\end{theorem}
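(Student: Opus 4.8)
The plan is to express the conformal radius $r(\ell)$ through the absorption time $\tau_\pi$ of the driving diffusion, compute the Laplace transform of $\tau_\pi$ in closed form, and then invert it using classical special-function identities. First, by \eqref{E:CR} the conformal radius of $\mathbb H\setminus\gamma(0,t]$ from $i$ equals $\Upsilon_t=e^{-3t/2}$, where $\gamma$ is the bubble curve of Theorem~\ref{T:bbubble}. As $t\uparrow\tau_\pi$ these simply connected domains decrease to $I(\ell)$ in the Carath\'eodory sense from $i$, so by the Carath\'eodory kernel theorem $r(\ell)=\Upsilon_{\tau_\pi}=e^{-3\tau_\pi/2}$. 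Hence, putting $s=\tfrac23\log(1/q)$ so that $q^{2n/3}=e^{-ns}$, one has $Bub^{(i)}(r(\ell)\ge q)=P(\tau_\pi\le s)$, and it remains to show $P(\tau_\pi\le s)=\prod_{n\ge1}(1-e^{-ns})^3$ for $\theta$ the solution of \eqref{E:theta-cond} with $\theta_0=0$ and $\tau_\pi=\inf\{t:\theta_t=\pi\}$.

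Next I compute the Laplace transform of $\tau_\pi$. Set $X_t=\sin^2(\theta_t/2)\in[0,1]$; It\^o's formula and \eqref{E:theta-cond} show that $X$ is a diffusion with generator $\tfrac12 x(1-x)\partial_x^2+(1-x)\partial_x$, with $x=0$ an entrance boundary and $x=1$ an absorbing exit boundary (respectively the $4$- and $0$-dimensional Bessel behaviour of $\theta$ at $0$ and $\pi$), and $\tau_\pi=\inf\{t:X_t=1\}$. For $p\ge0$ the function $v(x)=\mathbb E_x[e^{-p\tau_\pi}]$ is the unique solution on $(0,1)$ of $\tfrac12 x(1-x)v''+(1-x)v'=pv$ that is bounded at $x=0$ and satisfies $v(1)=1$; this is a hypergeometric equation whose solution bounded at $0$ is ${}_2F_1(a,b;2;x)$ with $a+b=1$ and $ab=2p$, i.e.\ $a,b=\tfrac12(1\mp\sqrt{1-8p})$. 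Evaluating at $x=X_0=0$ and using Gauss's summation ${}_2F_1(a,b;2;1)=1/\bigl(\Gamma(2-a)\Gamma(2-b)\bigr)$ (valid since $2-a-b=1$),
\[
\mathbb E\bigl[e^{-p\tau_\pi}\bigr]=\Gamma\!\Bigl(\tfrac{3+\sqrt{1-8p}}{2}\Bigr)\,\Gamma\!\Bigl(\tfrac{3-\sqrt{1-8p}}{2}\Bigr)=\frac{2\pi p}{\cos\!\bigl(\tfrac{\pi}{2}\sqrt{1-8p}\bigr)},
\]
the last equality from $\Gamma(\tfrac32+z)\Gamma(\tfrac32-z)=(\tfrac14-z^2)\,\pi/\cos\pi z$ with $z=\tfrac12\sqrt{1-8p}$ and $\tfrac14-z^2=2p$. (Alternatively this Laplace transform may be extracted from the transition density of the generalized Legendre process recorded in~\cite{FLM}.)

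Finally I invert. Let $F(s)=\prod_{n\ge1}(1-e^{-ns})^3$. Jacobi's identity $\prod_{n\ge1}(1-x^n)^3=\sum_{k\ge0}(-1)^k(2k+1)x^{k(k+1)/2}$ shows $F$ is continuous and increasing from $F(0^+)=0$ to $1$, hence is a distribution function on $(0,\infty)$; integrating by parts and summing term by term,
\[
\int_0^\infty e^{-ps}\,dF(s)=p\int_0^\infty e^{-ps}F(s)\,ds=p\sum_{k\ge0}\frac{(-1)^k(2k+1)}{p+k(k+1)/2}.
\]
By the Mittag--Leffler expansion $\pi/\cos\pi z=\sum_{k\ge0}(-1)^k(2k+1)/\bigl((k+\tfrac12)^2-z^2\bigr)$, taking $z=\tfrac12\sqrt{1-8p}$ so that $(k+\tfrac12)^2-z^2=k(k+1)+2p$, this equals $2\pi p/\cos(\tfrac{\pi}{2}\sqrt{1-8p})=\mathbb E[e^{-p\tau_\pi}]$. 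By uniqueness of Laplace transforms $\tau_\pi$ has distribution function $F$, which together with the first paragraph gives the theorem.

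The computational heart is the middle step: one has to identify the time-changed generator correctly and, above all, to impose the right boundary condition at the \emph{entrance} point $\theta=0$—where the naive initial-value problem degenerates—so that $v(x)=\mathbb E_x[e^{-p\tau_\pi}]$ is characterized as the bounded-at-$0$ hypergeometric solution. Once the ODE is set up, the remaining ingredients (Gauss's summation, Euler's reflection formula, the partial-fraction expansion of the secant, and Jacobi's identity) are all classical.
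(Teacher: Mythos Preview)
Your argument is correct and takes a genuinely different route from the paper's. Both proofs begin identically, reducing the question via \eqref{E:CR} to computing the distribution of the absorption time $\tau_\pi$ of the diffusion \eqref{E:theta-cond} started at $0$. From there the paths diverge. The paper expands the transition density of $\theta$ spectrally in Jacobi polynomials $P_n^{(1,-1)}$, integrates $p_t(0,\theta')$ over $\theta'\in(0,\pi)$ to obtain $P(\tau_\pi>t)=\sum_{n\ge1}(-1)^{n-1}(2n+1)e^{-tn(n+1)/2}$ directly, and then invokes Jacobi's triple product once. You instead pass to $X_t=\sin^2(\theta_t/2)$, identify the generator as a hypergeometric operator, and compute the Laplace transform $\mathbb E[e^{-p\tau_\pi}]$ via Gauss's summation formula and the reflection formula; you then recover the distribution function by matching this Laplace transform against that of $\prod_{n\ge1}(1-e^{-ns})^3$ using the Mittag--Leffler expansion of the secant together with Jacobi's identity.

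What each approach buys: the paper's spectral argument yields the survival probability $P(\tau_\pi>t)$ as an explicit series in one step, with no inversion needed, at the cost of importing the Jacobi-polynomial eigenbasis and the associated orthogonality computations. Your ODE route avoids Jacobi polynomials entirely and is in that sense more self-contained, trading the eigenfunction machinery for a second classical identity (the secant expansion) and a Laplace-inversion step. Two minor presentational points: you should say explicitly that the bounded solution normalized by $v(1)=1$ is ${}_2F_1(a,b;2;x)/{}_2F_1(a,b;2;1)$, so that $v(0)=\Gamma(2-a)\Gamma(2-b)$ (as written, ``evaluating at $x=0$'' and ``Gauss's summation at $x=1$'' look like they are pulling in opposite directions); and the term-by-term integration $\int_0^\infty e^{-ps}\sum_k(\cdots)\,ds$ is only conditionally convergent after interchange, so a word of justification (Abel summation, or first taking $p$ large and real and then invoking analyticity) would be in order.
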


\begin{proof}
Consider the infinitesimal generator $L$ of the conditioned process $\theta_t$,
\[
L=\frac{1}{2}\frac{d^2}{d\theta^2}+\left[\frac{3}{4}\cot\frac{\theta}{2}+\frac{1}{4}\tan\frac{\theta}{2}\right]\frac{d}{d\theta}.
\]
Its eigenfunctions and eigenvalues are
\[
p_n(\theta)=P_n^{(1,-1)}(\cos\theta),\quad \lambda_n=-\frac{1}{2}n(n+1),\quad n=0,1,2,\dots
\]
where $P_n^{(\nu,\mu)}$ denotes the $n$-th Jacobi polynomial of index $(\nu,\mu)$, given, for example, by Rodrigues' formula
\begin{equation}\label{E:Rodrigues}
(1-x)^{\nu}(1+x)^{\mu}P_n^{(\nu,\mu)}(x)=\frac{(-1)^n}{2^n n!}\frac{d^n}{dx^n}[(1-x)^{n+\nu}(1+x)^{n+\mu}],
\end{equation}
see \cite[p. 99]{specialfunctions}. The eigenfunctions for the adjoint operator $L^*$ are given by $p_n^*(\theta)=p_n(\theta)2\sin^3\frac{\theta}{2}\cos^{-1}\frac{\theta}{2}$, and
\[
\int_0^\pi p_n(\theta)p_m^*(\theta)\ d\theta=\frac{2n+2}{2n^2+n}\delta_{mn}.
\]
It follows readily from \eqref{E:Rodrigues} that $p_0(\pi)= 1$ (in fact, $p_0\equiv1$), while $p_n(\pi)=0$ for all $n\ge1$. Since the conditioned process is absorbed at $\theta=\pi$, it follows that the transitiondensity $p_t(\theta,\theta')$ of the conditional process to go from $\theta$ at time zero to $\theta'$ at time $t$, does not involve $p_0$ and is given by
\begin{equation}\label{E:transitiondensity}
p_t(\theta,\theta')=\sum_{n=1}^\infty e^{-t n(n+1)/2}\ \frac{2n^2+n}{2n+2}\ p_n(\theta)p_n^*(\theta').
\end{equation}
From the definition of $Bub^{(i)}$ from $P$ and \eqref{E:CR} it follows that
\begin{equation}\label{E:BubitoP}
Bub^{(i)}(\{\ell:r(\ell)\ge q\})=P\left(\exp\left(-\frac{3}{2}\tau_\pi\right)\ge q\right),
\end{equation}
where, we recall, $\tau_\pi$ is the lifetime of the conditioned process $\theta_t$. As the conditioned process starts at $\theta=0$, we need to calculate
\[
P(\tau_\pi>t)=\int_0^\pi p_t(0,\theta')\ d\theta'.
\]
From \eqref{E:Rodrigues} we get $p_n(0)=P_n^{(1,-1)}(1)=n+1$ as well as
\begin{align}
\int_0^\pi p_n^*(\theta')\ d\theta'&=\int_{-1}^1 P_n^{(1,-1)}(x)\frac{1-x}{1+x}\ dx\notag\\
&=\int_{-1}^1\frac{(-1)^n}{2^n n!}\frac{d^n}{dx^n}[(1-x)^{n+1}(1+x)^{n-1}]\ dx\notag\\
&=(-1)^{n+1}\frac{2}{n}.
\end{align}
Thus
\begin{align}\label{E:triple}
P(\tau_\pi>t)&=\sum_{n=1}^\infty (-1)^{n-1}(2n+1)e^{-t n(n+1)/2}\notag\\
&=1-\prod_{n=1}^\infty(1-e^{-t})^3,
\end{align}
where the last equality follows from Jacobi's triple product formula, see \cite[(10.4.9)]{specialfunctions}. Equations \eqref{E:BubitoP} and \eqref{E:triple} now imply the theorem.
\end{proof}

\begin{remark}
Applying Jacobi's triple product formula we obtain an alternative expression for the distribution of the conformal radius,
\begin{equation}\label{E:qsmall}
Bub^{(i)}(\{\ell:r(\ell)\ge q\})=\sum_{n=0}^\infty(-1)^n(2n+1)q^{n(n+1)/3}.
\end{equation}
The Dedekind $\eta$-function is given by
\[
\eta(s)=q^{1/24}\prod_{n=1}^\infty(1-q^n),\quad |q|<1,
\]
where $q=e^{2\pi is}$. Thus we may also write
\begin{equation}\label{E:eta}
Bub^{(i)}(\{\ell:r(\ell)\ge q^{3/2}\})=q^{-1/8}\eta(s)^3.
\end{equation}
\end{remark}

\begin{corollary}
We have
\begin{equation}\label{E:smallq}
Bub^{(i)}(\{\ell:r(\ell)\le q\})\sim 3q^{2/3},\quad\text{as }q\searrow0,
\end{equation}
and
\begin{equation}\label{E:smalla}
Bub^{(i)}(\{\ell:r(\ell)\ge e^{-a}\})\sim \left(\frac{3\pi}{a}\right)^{3/2} \exp\left(-\frac{3\pi^2}{4a}\right),\quad\text{as }a\searrow0.
\end{equation}
\end{corollary}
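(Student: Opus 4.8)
The plan is to derive both asymptotics directly from the two equivalent expressions for the distribution of the conformal radius established in Theorem \ref{T:bubble} and the subsequent remark, namely the product formula $Bub^{(i)}(\{\ell:r(\ell)\ge q\})=\prod_{n=1}^\infty(1-q^{2n/3})^3$ and the alternating series \eqref{E:qsmall}. For \eqref{E:smallq} I would start from
\[
Bub^{(i)}(\{\ell:r(\ell)\le q\})=1-\prod_{n=1}^\infty\left(1-q^{2n/3}\right)^3,
\]
and expand the product for small $q$. Writing $u=q^{2/3}$, the logarithm of the product is $3\sum_{n\ge1}\log(1-u^n)=-3u+O(u^2)$ as $u\searrow0$, so the product equals $1-3u+O(u^2)$ and hence $1-\prod(1-u^n)^3=3u+O(u^2)=3q^{2/3}+O(q^{4/3})$, which is exactly \eqref{E:smallq}. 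This part is routine; the only care needed is a uniform bound on the tail $\sum_{n\ge2}\log(1-u^n)$, which is $O(u^2)$ uniformly for $u$ bounded away from $1$.

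For \eqref{E:smalla} the product formula is useless since $q=e^{-a}\to1$, and one must instead exploit the modular transformation of the Dedekind $\eta$-function. Substituting $q^{3/2}\mapsto q$ in \eqref{E:eta}, we have $Bub^{(i)}(\{\ell:r(\ell)\ge e^{-a}\})=q^{-1/8}\eta(s)^3$ where $q=e^{-2a/3}=e^{2\pi is}$, i.e. $s=ia/(3\pi)$. The transformation law $\eta(-1/s)=\sqrt{-is}\,\eta(s)$ gives $\eta(s)=\sqrt{-i/s}\cdot\eta(-1/s)$; with $-1/s=3\pi i/a$, the nome of $\eta(-1/s)$ is $\tilde q=e^{2\pi i(3\pi i/a)}=e^{-6\pi^2/a}$, which is extremely small as $a\searrow0$, so $\eta(-1/s)=\tilde q^{1/24}\prod_{n\ge1}(1-\tilde q^n)=\tilde q^{1/24}(1+O(\tilde q))=e^{-\pi^2/(4a)}(1+o(1))$. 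Therefore
\[
Bub^{(i)}(\{\ell:r(\ell)\ge e^{-a}\})=q^{-1/8}\left(\frac{3\pi}{a}\right)^{3/2}e^{-3\pi^2/(4a)}\bigl(1+o(1)\bigr),
\]
and since $q^{-1/8}=e^{a/12}\to1$, this yields \eqref{E:smalla}.

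The main obstacle is simply making sure the modular transformation is applied with the correct normalization and branch of the square root, and keeping track of which power of $q$ appears after the substitution $r(\ell)\ge e^{-a}$ versus $r(\ell)\ge q^{3/2}$; concretely, \eqref{E:eta} reads $Bub^{(i)}(\{\ell:r(\ell)\ge q^{3/2}\})=q^{-1/8}\eta(s)^3$ with $q=e^{2\pi is}$, so setting $q^{3/2}=e^{-a}$ forces $q=e^{-2a/3}$ and $s=ia/(3\pi)$ as above. Once this bookkeeping is done, the dominant contribution is the single term $\tilde q^{1/24}=\exp(-\pi^2/(4a))$ of the transformed $\eta$, cubed to give $\exp(-3\pi^2/(4a))$, and the prefactor is the algebraic factor $(-is)^{3/2}=(a/(3\pi))^{3/2}$ inverted. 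I would also remark that the alternating series \eqref{E:qsmall}, being a theta-type series, has the same modular behavior, so one could alternatively obtain \eqref{E:smalla} by applying the Jacobi theta transformation directly to $\sum_{n\ge0}(-1)^n(2n+1)q^{n(n+1)/3}$; but routing through $\eta$ via \eqref{E:eta} is cleaner.
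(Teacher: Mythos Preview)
Your argument is correct and follows the same route as the paper: for \eqref{E:smallq} the paper reads off the asymptotic directly from the series \eqref{E:qsmall} (your expansion of the equivalent product form gives the same $3q^{2/3}+O(q^{4/3})$), and for \eqref{E:smalla} both you and the paper apply the modular transformation of $\eta$ to obtain exactly the expression \eqref{E:largeq}, from which the asymptotic is immediate. One small slip worth flagging, since you yourself singled out the branch bookkeeping as the delicate point: from $\eta(-1/s)=\sqrt{-is}\,\eta(s)$ one gets $\eta(s)=(-is)^{-1/2}\eta(-1/s)=\sqrt{i/s}\,\eta(-1/s)$, not $\sqrt{-i/s}$; your later computation of the prefactor via $(-is)^{3/2}=(a/(3\pi))^{3/2}$ is correct, so the slip does not propagate.
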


\begin{proof}
Equation \eqref{E:smallq} follows immediately from \eqref{E:qsmall}. Next, recall that $\eta(-1/s)=\sqrt{\frac{s}{i}}\eta(s)$, see \cite[Theorem 10.12.8]{specialfunctions}. Thus, from \eqref{E:eta} we get
\begin{equation}\label{E:largeq}
Bub^{(i)}(\{\ell:r(\ell)\ge q^{3/2}\})=q^{-1/8}\left(\frac{3\pi}{a}\right)^{3/2}e^{-\frac{3\pi^2}{4a}}\prod_{n=1}^\infty\left(1-e^{-\frac{6\pi^2 n}{a}}\right)^3.
\end{equation}
\end{proof}

\begin{corollary}\label{C:bounds}
For $q=e^{-a}\in (0,1)$ we have
\begin{equation}
Bub^{(i)}(\ell\subset U_a\cup\{\infty\})\le \prod_{n=1}^{\infty}\left(1-q^{\frac{2n}{3}}\right)^3,
\end{equation}
and for $q=e^{-a}\in(0,1/
4)$, we have
\begin{equation}
\prod_{n=1}^{\infty}\left(1-(4q)^{\frac{2n}{3}}\right)^3\le Bub^{(i)}(\ell\subset U_a\cup\{\infty\}).
\end{equation}
\end{corollary}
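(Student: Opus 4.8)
The plan is to combine Lemma \ref{L:bubble} with the explicit distribution of the conformal radius from Theorem \ref{T:bubble}. The event $\{\ell\subset U_a\cup\{\infty\}\}$ is sandwiched between two level sets of $r(\ell)$: by the first bullet of Lemma \ref{L:bubble}, $\{r(\ell)\ge 4q\}\subset\{\ell\subset U_a\cup\{\infty\}\}$, and by the second bullet, $\{\ell\subset U_a\cup\{\infty\}\}\subset\{r(\ell)\ge q\}$. Hence by monotonicity of $Bub^{(i)}$,
\[
Bub^{(i)}(\{\ell:r(\ell)\ge 4q\})\le Bub^{(i)}(\ell\subset U_a\cup\{\infty\})\le Bub^{(i)}(\{\ell:r(\ell)\ge q\}).
\]

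Next I would substitute the formula from Theorem \ref{T:bubble}. The upper bound is immediate: $Bub^{(i)}(\{\ell:r(\ell)\ge q\})=\prod_{n=1}^\infty(1-q^{2n/3})^3$, which gives the first displayed inequality for any $q=e^{-a}\in(0,1)$. For the lower bound, I apply Theorem \ref{T:bubble} with $4q$ in place of $q$; this requires $4q\in(0,1)$, i.e. $q\in(0,1/4)$, which is exactly the hypothesis stated in the corollary, and yields $Bub^{(i)}(\{\ell:r(\ell)\ge 4q\})=\prod_{n=1}^\infty(1-(4q)^{2n/3})^3$. Combining the two gives the second displayed inequality.

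There is essentially no obstacle here beyond correctly tracking the constant: the only subtlety is that the lower bound requires $4q<1$ so that Theorem \ref{T:bubble} applies to the argument $4q$, which is why the range is restricted to $q\in(0,1/4)$ there, whereas the upper bound holds on all of $(0,1)$. I would simply state the two-sided inclusion from Lemma \ref{L:bubble}, invoke monotonicity, and then read off the values from Theorem \ref{T:bubble}.
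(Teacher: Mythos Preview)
Your proposal is correct and is exactly the argument the paper has in mind: the paper's proof is the single sentence ``The bounds follow from Theorem \ref{T:bubble} and Lemma \ref{L:bubble},'' and you have spelled out precisely that deduction, including the reason for the restriction $q\in(0,1/4)$ in the lower bound.
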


\begin{proof}
The bounds follow from Theorem \ref{T:bubble} and Lemma \ref{L:bubble}.
\end{proof}

\begin{remark}
We know from \cite[Lemma19]{wernerSAL} that
\[
Bub^{(i)}(\ell\subset U_a\cup\{\infty\})\sim \frac{c}{a^2}\exp\left(-\frac{5\pi^2}{4a}\right),\quad\text{as }a\searrow0
\]
for some constant $c>0$. Thus the probability for a boundary bubble to stay in a very ``thin'' annulus of modulus $a$ decays at a faster rate as $a\searrow0$ than the probability that a boundary bubble encircling $i$ incloses a region of conformal radius $e^{-a}$. A heuristic indication as to why this is so is the fact that the slit unit disk $\{|z|<1\}\backslash[\sqrt{\delta},1)$ for small $\delta$ has conformal radius from 0  of the order $1-O(\delta)$. Thus boundary bubbles in the unit disk of conformal radius $1-\delta$ can ``venture far outside'' the annulus $\{1-\delta<|z|<1\}$.
\end{remark}

\end{document}